\newtheorem{theorem}{\textbf{Theorem}}[section]
\newtheorem{lemma}{\textbf{Lemma}}[section]
\newtheorem{proposition}{\textbf{Proposition}}[section]
\newtheorem{corollary}{\textbf{Corollary}}[section]
\newtheorem{remark}{\textbf{Remark}}[section]
\newtheorem{definition}{\textbf{Definition}}[section]
\def\be{\begin{equation}}
\def\ee{\end{equation}}
\def\bea{\begin{eqnarray}}
\def\eea{\end{eqnarray}}
\def\bt{\begin{theorem}}
\def\et{\end{theorem}}
\def\bl{\begin{lemma}}
\def\el{\end{lemma}}
\def\br{\begin{remark}}
\def\er{\end{remark}}
\def\bp{\begin{proposition}}
\def\ep{\end{proposition}}
\def\bc{\begin{corollary}}
\def\ec{\end{corollary}}
\def\bd{\begin{definition}}
\def\ed{\end{definition}}
\def\bfu{\mathbf{u}}
\def\baru{\overline{\mathbf{u}}}
\def\tu{\widetilde{\mathbf{u}}}
\def\tphi{\widetilde{\phi}}
\def\12{\frac{1}{2}}
\def\bfH{\mathbf{H}}
\def\tw{\widetilde{W}e^{-1}}
\def\sw{{We}^\ast}
\def\L{_{L^2}}
\def\H{_{H^1}}
\title{A second order in time, uniquely solvable, unconditionally stable numerical scheme for Cahn-Hilliard-Navier-Stokes equation}
\author{
 {\sc Daozhi Han}\footnote{Department of Mathematics, Florida State University, Tallahassee, FL 32306, USA.
Email: \emph{dhan@math.fsu.edu}} ,\ \
 {\sc Xiaoming Wang}\footnote{ Department of Mathematics, Florida
State University, Tallahassee, FL 32306, USA. Email:
\emph{wxm@math.fsu.edu}}
 }
\date{\today}
\begin{document}
\maketitle

\begin{abstract}
We propose a novel second order in time numerical scheme for Cahn-Hilliard-Navier-Stokes phase field model with matched density. The scheme is based on second order convex-splitting for the Cahn-Hilliard equation and pressure-projection for the Navier-Stokes equation. We show that the  scheme is mass-conservative, satisfies a modified energy law and is therefore unconditionally stable. Moreover, we prove that the scheme is unconditionally uniquely solvable at each time step by exploring the monotonicity associated with the scheme.  Thanks to the weak coupling of the scheme, we design an efficient Picard iteration procedure to further decouple the computation of Cahn-Hilliard equation and Navier-Stokes equation. We implement the scheme by the mixed finite element  method. Ample numerical experiments are performed to validate the accuracy and efficiency of the numerical scheme.
\end{abstract}

\begin{keywords}
 Cahn-Hilliard-Navier-Stokes; diffuse interface model;  energy law preserving; unique solvability; pressure-projection;  mixed finite element
\end{keywords}

\section{Introduction}
In this work, we are interested in solving numerically the Cahn-Hilliard-Navier-Stokes (CHNS) phase field model that describes the interface dynamics of a binary incompressible and macroscopically immiscible Newtonian fluids with matched density  and viscosity in a bounded domain $\Omega \subseteq \mathbb{R}^d, d=2, 3$. The non-dimensional system takes the explicit form as, cf. \cite{KKL2004}
\begin{align}
&\phi_t+ \nabla \cdot (\phi \mathbf{u})=\nabla \cdot(M(\phi) \nabla \mu), \quad  \text{ in } \Omega_T \label{CH} \\
&\mu=f_0^\prime(\phi)-\epsilon^2 \Delta \phi,  \quad \text{ in } \Omega_T \label{CP} \\
&\mathbf{u}_t-\frac{1}{Re} \Delta \mathbf{u}+\mathbf{u}\cdot \nabla \mathbf{u}+\nabla p=-\frac{\epsilon^{-1}}{\sw} \phi \nabla \mu,  \quad  \text{in } \Omega_T  \label{NS}\\
&\nabla \cdot \mathbf{u}=0, \quad  \text{in } \Omega_T \label{div} 
\end{align}
where $\mathbf{u}$ is the velocity field, $p$ is a modified pressure, $\phi $ is the phase field variable (order parameter), $\mu$ the chemical potential, $f_0(\phi)$ is the quartic homogeneous free energy density function
$
f_0(\phi)=\frac{1}{4}(1-\phi^2)^2,
$
and $\Omega_T:=\Omega \times (0,T)$ with $T>0$ a fixed constant.
$Re$ is the Reynolds number;  $\sw$ is the modified Weber number that measures the relative strengths of the kinetic and surface energies \cite{KKL2004}; $\epsilon$ is a dimensionless parameter that measures capillary width of the diffuse interface; $M(\phi)$ is the mobility function that incorporates the diffusional Peclet number $Pe$. We refer to \cite{KAD2007, LoTr1998} for the detailed non-dimensionalization of the CHNS system. 

We close the system with the following initial and boundary conditions
\begin{align}\label{BCs}
 \mathbf{u}&=0, \quad \text{on } \partial \Omega \times (0,T) \\
 \nabla \phi \cdot \mathbf{n}=\nabla \mu \cdot \mathbf{n} &=0, \quad \text{on } \partial \Omega \times (0,T) \\
 (\mathbf{u}, \phi)|_{t=0}&=(\mathbf{u}_0, \phi_0), \quad \text{in } \Omega.
\end{align}
Here $\mathbf{n}$ denotes the unit outer normal vector of the boundary $\partial \Omega$. 
It is clear that the CHNS system \eqref{CH}-\eqref{div} under the above boundary conditions is mass-conservative, 
\begin{align}\label{Mass-c}
\frac{d}{dt}\int_\Omega \phi\, dx=0,
\end{align}
and energy-dissipative 
\begin{align}\label{ConEnL}
\frac{d}{dt}E_{tot}(\mathbf{u}, \phi)=-\frac{1}{Re}\int_\Omega  |\nabla \mathbf{u}|^2\,dx -\frac{\epsilon^{-1}}{\sw} \int_\Omega M(\phi)|\nabla \mu|^2\, dx,
\end{align}
where the total energy $E_{tot}$ is defined as
\begin{align}\label{Etot}
E_{tot}(\mathbf{u}, \phi)=\int_{\Omega}\frac{1}{2}|\mathbf{u}|^2\, dx+ \frac{1}{\sw}\int_{\Omega} \big( \frac{1}{\epsilon} f_0(\phi)+\frac{\epsilon}{2}|\nabla \phi|^2\big)\, dx.
\end{align}
The first term on the right hand side of equation \eqref{Etot} is the total kinetic energy, and the term, denoted by $E_{f}$ throughout, is a measure of the surface energy of the fluid system. 

The CHNS phase field model \eqref{CH}-\eqref{div} is proposed 
as an alternative of sharp interface model to describe the dynamics of two phase, incompressible, and macroscopically immiscible Newtonian fluids with matched density, cf.  \cite{HoHa1977, GPV1996, AMW1998, LoTr1998, LiSh2003}. 
In contrast to the sharp interface model, the diffuse interface model recognizes the micro-scale mixing and hence treats the interface of two fluids as a transition layer with small but non-zero width $\epsilon$. Although the region is thin, it may play an important role during topological transition like interface pinchoff or reconnection \cite{LoTr1998}. One then introduces an order parameter $\phi$, for instance the concentration difference, which takes the value $1$ in the bulk of one fluid and $-1$ in regions filled by the other fluid and varies continuously between $1$ and $-1$ over the interfacial region. One can view the zero level set of the order parameter as the averaged interface of the mixture. Thus, the dynamics of the interface can be simulated on a fixed grid without explicit interface tracking, which renders the diffuse interface method an attractive numerical approach for deforming interface problems. The CHNS diffuse interface model has been successfully employed for the simulations of two-phase flow in various contexts. We refer the readers to \cite{AMW1998, KKL2004} and references therein for its diverse applications.

In this work, we assume that $m_1 \leq M(\phi) \leq m_2$ for constants $0<m_1 \leq m_2$. We point out that the degenerate mobility function may be more physically relevant, as it guarantees the order parameter stays within the physical bound $\phi \in [-1, 1]$ \cite{Boyer1999}, though uniqueness of weak solutions is still open even for the Cahn-Hilliard equation. Recent numerical experiments \cite{ZhWa2010} also indicate that the Cahn-Hilliard equation with degenerate mobility may be more accurate  for immiscible binary fluids.  Numerical resolution of the degenerate case is a subtle matter and beyond the scope of our current work (cf. \cite{BBG1999, CeGa2013} for the case of Cahn-Hilliard equation).

There are several challenges in solving the system \eqref{CH}-\eqref{div} numerically. First of all, the small interfacial width $\epsilon$ introduces tremendous amount of stiffness into the system (large spatial derivative within the interfacial region). It demands the numerical scheme to be unconditionally stable so that the stiffness can be handled
with ease. The resulting numerical scheme tends to be nonlinear and therefore poses challenge in proving unconditionally unique solvability. A popular strategy in discretizing the Cahn-Hilliard equation (Eqs \eqref{CH}--\eqref{CP}) in time is based on the convex-splitting of the free energy functional $E_f$, i.e., treating the convex part of the functional implicitly and concave part explicitly, an idea dates back to Eyre \cite{Eyre1998}. The design of convex-splitting scheme yields not only unconditional stability, but also unconditionally unique solvability for systems with symmetric structures\cite{Wise2010, CSW2013}. However, the variational approach for proving unique solvability (see the references above) is not applicable to the CHNS system since the advection term in Navier-Stokes equation (Eq.\eqref{NS}) breaks the symmetry. In addition, the stiffness issue naturally requires adaptive mesh refinement in order to reduce the computational cost.  Secondly, when it comes to solving the Navier-Stokes equation, one always faces the  difficulty of the coupling between velocity and pressure. The common practice is to use the well-known Chorin-Temam type pressure projection scheme, see \cite{GMS2006} for a general review. Lastly, higher order scheme is always preferable from the accuracy point-of-view. Yet, it is a challenge to design higher order scheme for a nonlinear system while maintaining the unconditional stability.
 
There have been many works on the numerical resolution of the CHNS system, see a comprehensive summary by Shen \cite{Shen2012}. Here we survey several papers that are especially relevant to ours.  In \cite{KKL2004}, Kim, Kang and Lowengrub proposed a conservative, second-order accurate fully implicit discretization of the CHNS system. The update of the pressure in the Navier-Stokes equation is based on an approximate pressure projection method.  To ensure the unconditional stability, they introduce a non-linear stabilization term to the Navier-Stokes solver. The scheme is strongly coupled and highly nonlinear, for which they design a multigrid iterative solver. The authors point out (without proof) that a restriction on the time-step size may be needed for the unique solvability of the scheme. In \cite{Feng2006}, Feng analyses a first-order in time, fully discrete finite element approximation of the CHNS system. He shows that his scheme is unconditionally energy-stable and convergent, but gives no analysis on unique solvability. 
Kay, Styles and Welford \cite{KSW2008} also studied a first-order in time, finite element approximation of CHNS system. In contrast to Feng's scheme, the velocity in the Cahn-Hilliard equation \eqref{CH} is discretized explicitly at the discrete time level. Thus the computation of the Cahn-Hilliard equation is fully decoupled from that of Navier-Stokes equation. Moreover, the unique solvability of the overall scheme can be established easily by exploring the gradient flow structure of the Cahn-Hilliard equation. However, a CFL condition has to be imposed for the scheme to be stable. See \cite{Minjeaud2013} for an operator-splitting strategy in decoupling the computation of Cahn-Hilliard equation and Navier-Stokes equation which still preserves the unconditional stability (without decoupling the pressure and velocity). Dong and Shen \cite{DoSh2012} recently derived a fully decoupled linear time stepping scheme for the CHNS system with variable density, which involves only constant matrices for all flow variables. 
However, there is no stability analysis on their numerical scheme.


In this paper, we propose a novel second order in time numerical scheme for Cahn-Hilliard-Navier-Stokes phase field model with matched density. The scheme is based on second order convex-splitting for the Cahn-Hilliard equation and pressure-projection for the Navier-Stokes equation. This scheme satisfies a modified energy law which mimics the continuous version of the energy law \eqref{ConEnL}, and is therefore unconditionally stable.  Moreover, we prove that the scheme is unconditionally uniquely solvable at each time step by exploring the monotonicity associated with the scheme.  Thanks to the weak coupling of the scheme, we design an efficient Picard iteration procedure to further decouple the computation of Cahn-Hilliard equation and Navier-Stokes equation. We implement the scheme by the mixed finite element  method. Ample numerical experiments are performed to validate the accuracy and efficiency of the numerical scheme.
The possibility of such a scheme is alluded in Remark 5.5 \cite{Shen2012}. A similar scheme without pressure-correction for Cahn-Hilliard-Brinkman equation is proposed in the concluding remarks of \cite{CSW2013}.

The rest of the paper is organized as follows. In section 2, we give the discrete time, continuous space scheme. We prove the mass-conservation, unconditional stability and unconditionally unique solvability in section 3. In section 4, the scheme is further discretized in space by mixed finite element approximation. An efficient Picard iteration procedure is proposed to solve the fully discrete equations. Finally, We provide some numerical experiments in section 5 to validate our numerical scheme.

\section{A Discrete Time, Continuous Space Scheme}
Let $\delta t >0$ be a time step size and set $t^k=k\delta t$ for $0\leq k \leq K=[T/\delta t]$.  Without ambiguity, we denote by $(f,g)$ the $L^2$ inner product between functions $f$ and $g$. Also for convenience, the following notations will be used throughout this paper
\begin{subequations}\label{notas}
\begin{align}
&\phi^{k+\frac{1}{2}}=\frac{1}{2}(\phi^{k+1}+\phi^k), \quad \tphi^{k+\frac{1}{2}}=\frac{3\phi^k-\phi^{k-1}}{2},  \\
&\baru^{k+\frac{1}{2}}=\frac{\baru^{k+1}+\mathbf{u}^k}{2}, \quad \tu^{k+\frac{1}{2}}=\frac{3\mathbf{u}^k-\mathbf{u}^{k-1}}{2}.
\end{align}
\end{subequations}
We propose the semi-implicit, semi-discrete scheme in strong form as follows: 
\begin{align}
&\frac{\phi^{k+1}-\phi^k}{\delta t}=\nabla \cdot \big(M(\tphi^{k+\frac{1}{2}})\nabla \mu^{k+\frac{1}{2}}-\tphi^{k+\frac{1}{2}}\baru^{k+\frac{1}{2}}\big),\label{2ndCHNSa}\\
&\mu^{k+\frac{1}{2}}=\frac{1}{2}\big((\phi^{k+1})^2+(\phi^k)^2\big)\phi^{k+\frac{1}{2}}-\tphi^{k+\frac{1}{2}}-\epsilon^2 \Delta \phi^{k+\frac{1}{2}}, \label{2ndCHNSb}\\
&\frac{\baru^{k+1}-\bfu^k}{\delta t}-\frac{1}{Re}\Delta \baru^{k+\frac{1}{2}}+B(\tu^{k+\frac{1}{2}},  \baru^{k+\frac{1}{2}}) =-\nabla p^k-\frac{\epsilon^{-1}}{\sw}\tphi^{k+\frac{1}{2}} \nabla \mu^{k+\frac{1}{2}},\label{2ndCHNSc}\\
&\left\{
\begin{aligned}
&\frac{\bfu^{k+1}-\baru^{k+1}}{\delta t} + \frac{1}{2}\nabla(p^{k+1}-p^k)=0, \\
&\nabla \cdot \bfu^{k+1}=0,
\end{aligned} \label{2ndCHNSd}
\right.
\end{align}
with boundary conditions  
\begin{align}\label{2ndCHNSe}
&\nabla \phi^{k+1} \cdot  \mathbf{n}|_{\partial \Omega}=0, \quad \nabla \mu^{k+\frac{1}{2}}\cdot \mathbf{n}|_{\partial \Omega}=0,\quad \baru^{k+\frac{1}{2}}|_{\partial \Omega}=0, \quad \bfu^{k+1} \cdot \mathbf{n} |_{\partial \Omega}=0.
\end{align}
Here $B(\mathbf{u}, \mathbf{v}):=(\mathbf{u}\cdot \nabla)\mathbf{v}+\frac{1}{2}(\nabla \cdot \mathbf{u})\mathbf{v}$ is the skew-symmetric form of the nonlinear advection term in the Navier-Stokes equation \eqref{2ndCHNSc}, which is first introduced by Temam \cite{Temam1969}.  In the space continuous level,  $\nabla \cdot \tu^{k+\12}=0$, thus $B(\tu^{k+\frac{1}{2}},  \baru^{k+\frac{1}{2}})=\tu^{k+\12} \cdot \nabla \baru^{k+\12} $, which amounts to a second order semi-implicit discretization of the advection term. The skew symmetric form $B(\mathbf{u}, \mathbf{v})$ induces a trilinear form $b$ defined as, $\forall \mathbf{u}, \mathbf{v}, \mathbf{w} \in \mathbf{H}^1_0(\Omega)$ 
\begin{align}\label{trilinear}
b(\mathbf{u},\mathbf{v},\mathbf{w})=(B(\mathbf{u}, \mathbf{v}), \mathbf{w})=\frac{1}{2}\{(\mathbf{u}\cdot \nabla \mathbf{v}, \mathbf{w})-(\mathbf{u}\cdot \nabla \mathbf{w}, \mathbf{v})\}.
\end{align}
It follows immediately that $b(\mathbf{u},\mathbf{v},\mathbf{v})=0$ for any $\mathbf{u}, \mathbf{v} \in \mathbf{H}^1_0 (\Omega)$. This skew symmetry holds regardless of whether $\mathbf{u}, \mathbf{v}$ are divergence-free or not, which would help to preserve the stability when the scheme is further discretized in space.

The overall scheme \eqref{2ndCHNSa}--\eqref{2ndCHNSd} is based on the Crank-Nicolson time discretization and the second order Adams-Bashforth extrapolation. We note that the term $\frac{1}{2}\big((\phi^{k+1})^2+(\phi^k)^2\big)\phi^{k+\frac{1}{2}}-\tphi^{k+\frac{1}{2}}$ from the chemical potential equation \eqref{2ndCHNSb} is a second order approximation of the nonlinear term $f_0^\prime(\phi)$ (Eq.\eqref{CP}), which is derived according to a convex-splitting of the free energy density function $f_0(\phi)$. To see this, we rewrite $f_0(\phi)$ as the sum of a convex function and a concave function
$$
f_0(\phi)=f_v(\phi)+f_c(\phi):= \frac{1}{4}\phi^4+\big(-\frac{1}{2}\phi^2+\frac{1}{4}\big),
$$ 
and accordingly $f_0^\prime(\phi)=f_v^\prime(\phi)+f_c^\prime(\phi)$. The idea of convex-splitting is to use explicit discretization for the concave part (i.e. $f_c^\prime(\tphi^{k+\frac{1}{2}})$) and implicit discretization for the convex part. Thus we approximate $f_v^\prime(\phi^{k+\frac{1}{2}})$ by the Crank-Nicolson scheme
$$
f_v^\prime(\phi^{k+\frac{1}{2}})\approx \frac{f_v(\phi^{k+1})-f_v(\phi^k)}{\phi^{k+1}-\phi^k}=\frac{1}{2}[(\phi^{k+1})^2+(\phi^k)^2]\phi^{k+\frac{1}{2}}.
$$
Such a second order convex-splitting scheme is originally proposed and analysed in \cite{HWWL2009, BLWW2013} in the context of phase field crystal equation, see also \cite{SWWW2012} for applications in thin film epitaxy. We point out one can also approximate $f_0^\prime (\phi^{k+\frac{1}{2}})$ directly by Crank-Nicolson scheme \cite{KKL2004, GoHu2011} which would yield unconditional stability. The design of convex-splitting scheme enables us to prove not only unconditional stability but also unconditionally unique solvability of the overall scheme.

Eqs. \eqref{2ndCHNSc} and \eqref{2ndCHNSd} comprise  the second order incremental pressure projection method
of Van Kan type \cite{vanKan1986} with linear extrapolation for the nonlinear advection term.  The viscous step (Eq. \eqref{2ndCHNSc}) solves for an intermediate velocity $\baru^{k+1}$ (or, equivalently $\baru^{k+\frac{1}{2}}$) which is not divergence-free. The projection step (Eq. \eqref{2ndCHNSd}) is amount to \linebreak $\bfu^{k+1}=P_{\mathbf{H}} \baru^{k+1}$, where $P_{\mathbf{H}}$ is the Leray projection operator into $\mathbf{H}$:
\begin{align}
\mathbf{H}:=\{\mathbf{v} \in \mathbf{L}^2(\Omega); \nabla \cdot \mathbf{v}=0; \mathbf{v} \cdot \mathbf{n}|_{\partial \Omega}=0\}. \nonumber
\end{align}
The projection equation \eqref{2ndCHNSd} can also be solved in two sub-steps: first through a Pressure Poisson equation for the pressure increment 
\begin{equation} \label{PrePoi}
\left \{
\begin{aligned}
&\Delta (p^{k+1}-p^k)=\frac{2}{\delta t}\nabla \cdot \tu^{k+1}, \\
&\nabla (p^{k+1}-p^k) \cdot \mathbf{n}|_{\partial \Omega}=0.
\end{aligned}
\right.
\end{equation}
and then by an algebraic update for velocity
\begin{align} \label{AupVel}
&\bfu^{k+1}=\tu^{k+1}-\frac{\delta t}{2} \nabla (p^{k+1}-p^k). 
\end{align}
Variants of such a splitting method are analyzed in \cite{Shen1996} where it is shown (discrete time, continuous space) that the schemes are second order accurate for velocity in $l^2(0, T; L^2(\Omega))$ but only first order accurate for pressure in $l^\infty(0,T; L^2(\Omega))$. The loss of accuracy for pressure is due to the artificial boundary condition (cf. Eq. \eqref{PrePoi}) imposed on pressure \cite{ELi1995}.   We also remark that the Crank-Nicolson scheme with linear extrapolation is a popular time discretization for the Navier-Stokes equation. We refer to  \cite{Ingram2013} and references therein for analysis on this type of discretization.

Note that the projection step (Eq. \eqref{2ndCHNSd}) is decoupled from the rest of the equations. Moreover, the coupling between Eqs. \eqref{2ndCHNSa}-\eqref{2ndCHNSc} is fairly weak, thanks to the semi-implicit discretization. We see that the Cahn-Hilliard equation \eqref{2ndCHNSa} and \eqref{2ndCHNSb} is coupled with the Navier-Stokes equation \eqref{2ndCHNSa} only through the velocity $\baru^{k+\12}$ in the advection term of Eq. \eqref{2ndCHNSa} and the chemical potential $\mu^{k+\12}$ in the elastic forcing term of Eq. \eqref{2ndCHNSc}. On the one hand, this allows us to use a Picard iteration procedure on velocity to further  decouple the computation of the nonlinear Cahn-Hilliard equation from the linear Navier-Stokes equation, see Section 4 for details. On the other hand, owing to the special design, we are able to show the unconditionally unique solvability of the system \eqref{2ndCHNSa}-\eqref{2ndCHNSc} by a monotonicity argument (cf. Section 3). In fact, one can define a solution operator $\phi^{k+1}(\mu^{k+\12}): \mu^{k+\12} \rightarrow \phi^{k+1}$ from equation \eqref{2ndCHNSb}. Likewise, equation \eqref{2ndCHNSc} gives rise to a solution operator $\baru^{k+\12}(\mu^{k+\12}): \mu^{k+\12} \rightarrow \baru^{k+\12}$. As a result, the system \eqref{2ndCHNSa}-\eqref{2ndCHNSc} reduces to a scalar equation in terms of the unknown $\mu^{k+\12}$
\begin{align*}
\phi^{k+1}(\mu^{k+\12})-\phi^k + \delta t\nabla \cdot \big(\tphi^{k+\frac{1}{2}}\baru^{k+\frac{1}{2}}(\mu^{k+\12})\big) -\delta t\nabla \cdot \big(M(\tphi^{k+\frac{1}{2}})\nabla \mu^{k+\frac{1}{2}} \big)=0.
\end{align*}
The key here is to recognize that the left-hand side of the above equation defines a strictly monotone operator $T(\mu)$, in the sense that
\begin{align*}
\left\langle T(\mu)-T(\nu), \mu-\nu \right\rangle \geq 0,
\end{align*}
with equal sign if and only if $\mu=\nu$. Thus one can invoke the Browder-Minty Lemma \ref{BrowderMinty} (see Section 3) to prove the unique existence of such a solution $\mu^{k+\12}$. We remark that the variational approach \cite{Wise2010, CSW2013} is not directly applicable for the unique solvability of the Cahn-Hilliard-Navier-Stokes system \eqref{2ndCHNSa}-\eqref{2ndCHNSd}. In both cases (Cahn-Hilliard-Hele-Shaw, Cahn-Hilliard-Brinkman), the approach relies on the symmetry of the underlying systems which breaks down  in the Navier-Stokes equation due to the nonlinear advection.

\section{Properties of the scheme}
In this section, we summarize the properties of the discrete time, continuous space scheme \eqref{2ndCHNSa}-\eqref{2ndCHNSe}, namely mass-conservation, unconditional stability and unconditionally unique solvability. It will be clear from the proof, that these properties will be preserved when the scheme is combined with any consistent Galerkin type spatial discretization schemes.

First of all, one can readily obtain that the scheme is mass-conservative.
\begin{proposition}
The scheme \eqref{2ndCHNSa}-\eqref{2ndCHNSd} equipped with the boundary condition \eqref{2ndCHNSe} satisfies the mass-conservation, i.e.,
\begin{align*}
\int_{\Omega}\phi^{k+1}dx =\int_{\Omega}\phi^k dx, \quad k=0, 1, \cdots K-1.
\end{align*}
\end{proposition}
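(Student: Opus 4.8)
The plan is to integrate the discrete Cahn--Hilliard balance law \eqref{2ndCHNSa} over $\Omega$ and exploit the fact that its right-hand side is a pure divergence, so that the total mass change collapses to a boundary flux that is annihilated by the no-flux and no-slip conditions in \eqref{2ndCHNSe}.

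First I would integrate \eqref{2ndCHNSa} over the spatial domain $\Omega$ and apply the divergence theorem to the right-hand side, converting it into a surface integral over $\partial\Omega$:
\begin{align*}
\frac{1}{\delta t}\int_\Omega (\phi^{k+1}-\phi^k)\,dx &= \int_\Omega \nabla \cdot \big(M(\tphi^{k+\frac{1}{2}})\nabla \mu^{k+\frac{1}{2}}-\tphi^{k+\frac{1}{2}}\baru^{k+\frac{1}{2}}\big)\,dx \\
&= \int_{\partial\Omega} \big(M(\tphi^{k+\frac{1}{2}})\nabla \mu^{k+\frac{1}{2}}-\tphi^{k+\frac{1}{2}}\baru^{k+\frac{1}{2}}\big)\cdot \mathbf{n}\,dS.
\end{align*}

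Next I would dispose of the two boundary contributions separately. The diffusive flux $M(\tphi^{k+\frac{1}{2}})\nabla \mu^{k+\frac{1}{2}}\cdot\mathbf{n}$ vanishes on $\partial\Omega$ by the homogeneous Neumann condition $\nabla \mu^{k+\frac{1}{2}}\cdot \mathbf{n}=0$ from \eqref{2ndCHNSe}, irrespective of the value of the mobility there. The advective flux $\tphi^{k+\frac{1}{2}}\baru^{k+\frac{1}{2}}\cdot\mathbf{n}$ vanishes because the intermediate velocity satisfies the no-slip condition $\baru^{k+\frac{1}{2}}|_{\partial\Omega}=0$; here it is worth emphasizing that one uses the \emph{entire} velocity trace being zero (a Dirichlet condition), not merely $\baru^{k+\frac{1}{2}}\cdot\mathbf{n}=0$, so that no assumption on the boundary value of the extrapolated phase field $\tphi^{k+\frac{1}{2}}$ is needed. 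Hence the surface integral is identically zero, giving $\int_\Omega(\phi^{k+1}-\phi^k)\,dx=0$, and multiplying through by $\delta t>0$ yields the claim for each $k$.

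There is essentially no genuine obstacle here: the result is a direct consequence of the conservative (divergence) form in which the convective and diffusive transport have been discretized in \eqref{2ndCHNSa}, together with the boundary conditions \eqref{2ndCHNSe}. The only point requiring a moment of care is the bookkeeping of which boundary condition annihilates which flux term, as noted above. I would also remark that, since the argument amounts to testing \eqref{2ndCHNSa} against the constant function $1$ and never invokes the nonlinear chemical-potential relation \eqref{2ndCHNSb} or the projection step \eqref{2ndCHNSd}, it carries over verbatim to any Galerkin spatial discretization whose test space contains the constants --- precisely the robustness advertised at the opening of this section.
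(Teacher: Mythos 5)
Your argument is correct and is precisely the standard computation the paper has in mind (the paper states the proposition without proof, calling it something "one can readily obtain"): test \eqref{2ndCHNSa} with the constant $1$, integrate by parts, and kill the boundary flux using $\nabla\mu^{k+\frac12}\cdot\mathbf{n}=0$ and $\baru^{k+\frac12}|_{\partial\Omega}=0$ from \eqref{2ndCHNSe}. Your closing remark that the argument survives under any Galerkin discretization whose test space contains the constants is also exactly the point the authors make at the start of Section 3.
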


Next, we show that our numerical scheme \eqref{2ndCHNSa}-\eqref{2ndCHNSe} is unconditionally stable, thus allowing for large time stepping. Recall the definition of the total energy functional $E_{tot}(\mathbf{u}, \phi)$ in Eq. \eqref{Etot}.
\begin{proposition}
The scheme \eqref{2ndCHNSa}-\eqref{2ndCHNSd} with the boundary condition \eqref{2ndCHNSe} satisfies the modified energy law
\begin{align}\label{ModEnergyLaw}
&\Big\{E_{tot}(\mathbf{u}^{k+1}, \phi^{k+1})+\frac{\epsilon^{-1}}{4\sw}||\phi^{k+1}-\phi^k||\L^2+\frac{\delta t^2}{8}||\nabla p^{k+1}||\L^2\Big\} \nonumber\\
& -\Big\{E_{tot}(\mathbf{u}^{k}, \phi^{k})+\frac{\epsilon^{-1}}{4\sw}||\phi^{k}-\phi^{k-1}||\L^2+\frac{\delta t^2}{8}||\nabla p^{k}||\L^2\Big\}\nonumber \\
&=-\delta t\frac{\epsilon^{-1}}{\sw}||\sqrt{M}\nabla \mu^{k+\frac{1}{2}}||\L^2-\delta t \frac{1}{Re}||\nabla \baru^{k+\frac{1}{2}}||\L^2-\frac{\epsilon^{-1}}{4\sw}||\phi^{k+1}-2\phi^k+\phi^{k-1}||\L^2.
\end{align}
Thus it is \textbf{unconditionally stable}.
\end{proposition}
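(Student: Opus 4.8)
The plan is to establish \eqref{ModEnergyLaw} by the discrete energy method: test each equation of the scheme against a tailored multiplier, integrate by parts using the boundary conditions \eqref{2ndCHNSe}, and sum the resulting identities. Concretely, I would pair \eqref{2ndCHNSa} with $\frac{\epsilon^{-1}}{\sw}\mu^{k+\frac{1}{2}}$ and pair \eqref{2ndCHNSc} with $\baru^{k+\frac{1}{2}}$, while \eqref{2ndCHNSb} is used only to re-express the inner product $\big(\mu^{k+\frac{1}{2}},\phi^{k+1}-\phi^k\big)$. Integration by parts turns the diffusion term in \eqref{2ndCHNSa} and the viscous term in \eqref{2ndCHNSc} into $-\delta t\frac{\epsilon^{-1}}{\sw}\|\sqrt{M}\nabla\mu^{k+\frac{1}{2}}\|\L^2$ and $-\frac{\delta t}{Re}\|\nabla\baru^{k+\frac{1}{2}}\|\L^2$, which are precisely the first two dissipation terms in \eqref{ModEnergyLaw}; the trilinear term drops out because $b(\tu^{k+\frac{1}{2}},\baru^{k+\frac{1}{2}},\baru^{k+\frac{1}{2}})=0$ by the skew-symmetry noted after \eqref{trilinear}. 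Crucially, the convective coupling term $\delta t\frac{\epsilon^{-1}}{\sw}\big(\tphi^{k+\frac{1}{2}}\baru^{k+\frac{1}{2}},\nabla\mu^{k+\frac{1}{2}}\big)$ produced by \eqref{2ndCHNSa} and the forcing term $-\delta t\frac{\epsilon^{-1}}{\sw}\big(\tphi^{k+\frac{1}{2}}\nabla\mu^{k+\frac{1}{2}},\baru^{k+\frac{1}{2}}\big)$ produced by \eqref{2ndCHNSc} coincide up to sign, so they cancel exactly upon addition; this is the structural reason behind energy stability.

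The phase-field identity is the heart of the argument. Using \eqref{2ndCHNSb} I would compute $\big(\mu^{k+\frac{1}{2}},\phi^{k+1}-\phi^k\big)$ term by term. The implicit convex part satisfies the pointwise identity $\frac{1}{2}\big((\phi^{k+1})^2+(\phi^k)^2\big)\phi^{k+\frac{1}{2}}(\phi^{k+1}-\phi^k)=f_v(\phi^{k+1})-f_v(\phi^k)$ with $f_v=\frac{1}{4}\phi^4$, and the gradient term integrates by parts to $\frac{\epsilon^2}{2}\big(\|\nabla\phi^{k+1}\|\L^2-\|\nabla\phi^k\|\L^2\big)$; together these recover $E_f(\phi^{k+1})-E_f(\phi^k)$. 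The explicit concave part $-\tphi^{k+\frac{1}{2}}$ reproduces $f_c(\phi^{k+1})-f_c(\phi^k)$ up to an extrapolation defect, which I would isolate by writing $\tphi^{k+\frac{1}{2}}-\phi^{k+\frac{1}{2}}=-\frac{1}{2}(\phi^{k+1}-2\phi^k+\phi^{k-1})$ and then applying the polarization identity $(a-b,a)=\frac{1}{2}\big(\|a\|\L^2-\|b\|\L^2+\|a-b\|\L^2\big)$ with $a=\phi^{k+1}-\phi^k$, $b=\phi^k-\phi^{k-1}$; the resulting $\frac{1}{2}$ prefactor yields exactly the three $\frac{\epsilon^{-1}}{4\sw}$ correction terms $\|\phi^{k+1}-\phi^k\|\L^2$, $\|\phi^k-\phi^{k-1}\|\L^2$ and $\|\phi^{k+1}-2\phi^k+\phi^{k-1}\|\L^2$ appearing in \eqref{ModEnergyLaw}.

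The momentum and projection steps are where I expect the main obstacle, because of the pressure. The time-difference term pairs cleanly as $\big(\baru^{k+1}-\bfu^k,\baru^{k+\frac{1}{2}}\big)=\frac{1}{2}\big(\|\baru^{k+1}\|\L^2-\|\bfu^k\|\L^2\big)$, using $\baru^{k+\frac{1}{2}}=\frac{1}{2}(\baru^{k+1}+\bfu^k)$. The difficulty is to convert $\|\baru^{k+1}\|\L^2$ into $\|\bfu^{k+1}\|\L^2$ and to absorb $-(\nabla p^k,\baru^{k+\frac{1}{2}})$ into a clean telescoping $\frac{\delta t^2}{8}\|\nabla p\|\L^2$. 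Rewriting \eqref{2ndCHNSd} as $\baru^{k+1}=\bfu^{k+1}+\frac{\delta t}{2}\nabla(p^{k+1}-p^k)$ and using that $\bfu^{k+1}$ and $\bfu^k$ are discretely divergence-free with zero normal trace gives the orthogonalities $(\nabla q,\bfu^{k+1})=(\nabla q,\bfu^k)=0$, so that $\|\baru^{k+1}\|\L^2=\|\bfu^{k+1}\|\L^2+\frac{\delta t^2}{4}\|\nabla(p^{k+1}-p^k)\|\L^2$ and $-(\nabla p^k,\baru^{k+\frac{1}{2}})=-\frac{\delta t}{4}(\nabla p^k,\nabla(p^{k+1}-p^k))$. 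Polarizing the latter produces $-\frac{\delta t}{8}\big(\|\nabla p^{k+1}\|\L^2-\|\nabla p^k\|\L^2\big)+\frac{\delta t}{8}\|\nabla(p^{k+1}-p^k)\|\L^2$. The delicate point is that, after multiplying the momentum identity by $\delta t$, this spurious $\frac{\delta t}{8}\|\nabla(p^{k+1}-p^k)\|\L^2$ exactly annihilates the one generated by the $\frac{\delta t^2}{4}\|\nabla(p^{k+1}-p^k)\|\L^2$ term from the kinetic energy, leaving only the telescoping pressure difference.

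Finally, I would add the phase-field identity and the momentum identity (each scaled by $\delta t$). The two convective coupling terms cancel as noted, the kinetic and surface energies combine into $E_{tot}(\bfu^{k+1},\phi^{k+1})-E_{tot}(\bfu^k,\phi^k)$, and the pressure and phase corrections telescope into the bracketed quantities in \eqref{ModEnergyLaw}, reproducing the identity exactly. Since every term on the right-hand side is nonpositive for arbitrary $\delta t>0$, unconditional stability follows immediately. The only genuinely technical step is the bookkeeping of the pressure-increment terms described above; everything else reduces to the convex-splitting identity and standard polarization manipulations.
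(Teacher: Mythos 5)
Your proposal is correct and follows essentially the same route as the paper's proof: test \eqref{2ndCHNSa} with $\mu^{k+\frac{1}{2}}$, use \eqref{2ndCHNSb} tested with $\phi^{k+1}-\phi^k$ together with the convex-splitting identity and the polarization identity for the extrapolation defect, test \eqref{2ndCHNSc} with $\baru^{k+\frac{1}{2}}$ using skew-symmetry of $b$, and cancel the two coupling terms. Your treatment of the projection step (substituting $\baru^{k+1}=\bfu^{k+1}+\frac{\delta t}{2}\nabla(p^{k+1}-p^k)$ and invoking orthogonality) is an algebraically equivalent repackaging of the paper's tests of \eqref{2ndCHNSd} against $\mathbf{u}^{k+1}\delta t$ and $\frac{\delta t^2}{2}\nabla p^k$, with the same cancellation of the $\|\nabla(p^{k+1}-p^k)\|\L^2$ terms.
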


\begin{proof}
One first takes the $L^2$ inner product of Eq. \eqref{2ndCHNSa} with $\delta t \mu^{k+\frac{1}{2}}$ to obtain
\begin{align}\label{2ndCH1}
&\big(\phi^{k+1}-\phi^k, \mu^{k+\frac{1}{2}} \big)=-\delta t||\sqrt{M}\nabla \mu^{k+\frac{1}{2}}||\L^2+\delta t\big(\tphi^{k+\frac{1}{2}}\baru^{k+\frac{1}{2}},\nabla \mu^{k+\frac{1}{2}}\big).
\end{align}
Next, multiplying Eq. \eqref{2ndCHNSb} by $(\phi^{k+1}-\phi^k)$, performing integration by parts and using the the following identity
\begin{eqnarray}
& &\big(\tphi^{k+\frac{1}{2}},\phi^{k+1}-\phi^k\big)\nonumber \\
&=& \frac{1}{2}\big(3\phi^k-\phi^{k-1},\phi^{k+1}-\phi^k\big) \nonumber \\
&=&\frac{1}{2}\big(\phi^{k+1}+\phi^k,\phi^{k+1}-\phi^k\big) -\frac{1}{2}\big(\phi^{k+1}-2\phi^k+\phi^{k-1},\phi^{k+1}-\phi^k\big)\nonumber \\
&=&\frac{1}{2}\big(||\phi^{k+1}||\L^2-||\phi^k||\L^2\big)-\frac{1}{4}\{||\phi^{k+1}-\phi^k||\L^2-||\phi^k-\phi^{k-1}||\L^2\nonumber \\
& & +||\phi^{k+1}-2\phi^k+\phi^{k-1}||\L^2\}, \nonumber 
\end{eqnarray}
one deduces
\begin{align}\label{2ndCH2}
&-\big(\phi^{k+1}-\phi^k, \mu^{k+\frac{1}{2}}\big)+\big(f_0(\phi^{k+1})-f_0(\phi^k),1\big)+\frac{\epsilon^2}{2}(||\nabla \phi^{k+1}||\L^2-||\nabla \phi^k||\L^2)\nonumber \\
&+\frac{1}{4}\{||\phi^{k+1}-\phi^k||\L^2-||\phi^{k}-\phi^{k-1}||\L^2+||\phi^{k+1}-2\phi^k+\phi^{k-1}||\L^2\}=0,
\end{align}
where one has utilized the definition of $f_0(\phi)=\frac{1}{4}(\phi^2-1)^2$.
Summing up Eqs. \eqref{2ndCH1} and \eqref{2ndCH2} gives
\begin{align}\label{2ndCHf}
&\big(f_0(\phi^{k+1})-f_0(\phi^k),1\big)+\frac{\epsilon^2}{2}(||\nabla \phi^{k+1}||\L^2-||\nabla \phi^k||\L^2)+\frac{1}{4}(||\phi^{k+1}-\phi^k||\L^2-||\phi^{k}-\phi^{k-1}||\L^2)\nonumber \\
&=-\frac{1}{4}||\phi^{k+1}-2\phi^k+\phi^{k-1}||\L^2-\delta t||\sqrt{M}\nabla \mu^{k+\frac{1}{2}}||\L^2+\delta t\big(\tphi^{k+\frac{1}{2}}\baru^{k+\frac{1}{2}},\nabla \mu^{k+\frac{1}{2}}\big),
\end{align}

Now we turn to the Navier-Stokes part. Taking the $L^2$ inner product of Eq. \eqref{2ndCHNSc} with $\baru^{k+\frac{1}{2}}\delta t$ and using the skew-symmetry of the trilinear form $b$ in \eqref{trilinear}, one gets
\begin{align}\label{2ndNS1}
&\frac{1}{2}(||\baru^{k+1}||\L^2-||\bfu^k||\L^2)+\delta t \frac{1}{Re}||\nabla \baru^{k+\frac{1}{2}}||\L^2
=-\delta t \big(\nabla p^k, \baru^{k+\frac{1}{2}}\big)-\delta t \frac{\epsilon^{-1}}{\sw}\big(\tphi^{k+\frac{1}{2}} \nabla \mu^{k+\frac{1}{2}}, \baru^{k+\frac{1}{2}}\big).
\end{align}
Testing the first equation in \eqref{2ndCHNSd} by $\mathbf{u}^{k+1} \delta t$ and performing integration by parts yield
\begin{align}\label{2ndNS2}
&\frac{1}{2}(||\mathbf{u}^{k+1}||\L^2-||\baru^{k+1}||\L^2+||\mathbf{u}^{k+1}-\baru^{k+1}||\L^2)=0,
\end{align}
where one has utilized explicitly the divergence-free condition $\nabla \cdot \bfu^{k+1}=0$.
Next, we rewrite the projection step Eq. \eqref{2ndCHNSd} as
\begin{align*}
\frac{\mathbf{u}^{k+1}+\mathbf{u}^k-2\baru^{k+\frac{1}{2}}}{\delta t} + \frac{1}{2}\nabla(p^{k+1}-p^k)=0.
\end{align*}
Testing the above equation with $\frac{\delta t^2}{2}\nabla p^k$, one arrives at
\begin{align}\label{2ndNS3}
\frac{\delta t^2}{8}\big\{||\nabla p^{k+1}||\L^2-||\nabla p^{k}||\L^2-||\nabla(p^{k+1}-p^k)||\L^2\big\}=\delta t\big(\nabla p^k, \baru^{k+\frac{1}{2}}\big).
\end{align}
On the other hand, it follows directly from Eq. \eqref{2ndCHNSd} that 
\begin{align}\label{2ndNS4}
\frac{{\delta t}^2}{8}||\nabla(p^{k+1}-p^k)||\L^2=\frac{1}{2}||\mathbf{u}^{k+1}-\baru^{k+1}||\L^2.
\end{align}
Now summing up Eqs. \eqref{2ndNS1}-\eqref{2ndNS3} and in view of Eq. \eqref{2ndNS4}, one obtains
\begin{align}\label{2ndNSf}
&\frac{1}{2}(||\mathbf{u}^{k+1}||\L^2-||\mathbf{u}^k||\L^2)+\frac{\delta t^2}{8}\big\{||\nabla p^{k+1}||\L^2-||\nabla p^{k}||\L^2\big\}
\nonumber \\
&=-\delta t \frac{1}{Re}||\nabla \baru^{k+\frac{1}{2}}||\L^2-\delta t \frac{\epsilon^{-1}}{\sw}\big(\tphi^{k+\frac{1}{2}} \nabla \mu^{k+\frac{1}{2}}, \baru^{k+\frac{1}{2}}\big).
\end{align}

The energy law \eqref{ModEnergyLaw} then follows from summing up the multiple of Eq. \eqref{2ndCHf} by $\frac{\epsilon^{-1}}{\sw}$ and Eq. \eqref{2ndNSf}.
\end{proof}

\begin{remark}
Heuristically, $E_{tot}(\mathbf{u}^{k+1}, \phi^{k+1})+\frac{\tw}{4\epsilon}||\phi^{k+1}-\phi^k||\L^2+\frac{\delta t^2}{8}||\nabla p^{k+1}||\L^2$ is a second order approximation of $E_{tot}(\mathbf{u}^{k+1}, \phi^{k+1})$, as one can write
\begin{align*}
||\phi^{k+1}-\phi^k||\L^2=\delta t^2||(\phi^{k+1}-\phi^k)/\delta t||\L^2,
\end{align*}
and $(\phi^{k+1}-\phi^k)/\delta t$ is an approximation of $\phi_t$ at $t^{k+1}$.
\end{remark}

To prove the unconditionally unique solvability of Eqs. \eqref{2ndCHNSa}-\eqref{2ndCHNSe}, we write them in a weak form. Note that the pressure equation \eqref{2ndCHNSd} is completely decoupled from the rest of the equations. Thus one only needs to establish the unique solvability of Eqs. \eqref{2ndCHNSa}-\eqref{2ndCHNSc}. Once $\baru^{k+1}$ or equivalently $\baru^{k+\12}$ is known, one can find $\bfu^{k+1}$ and $p^{k+1}$ by either solving a Darcy problem as Eq. \eqref{2ndCHNSd} or solving a pressure Poisson equation and an update of the velocity as described in Eqs. \eqref{PrePoi}-\eqref{AupVel}. Hereafter, we denote by $L^2_0(\Omega)$ an $L^2$ subspace with mean zero, i.e., $L^2_0(\Omega):= \{f\in L^2(\Omega); \int_\Omega f dx=0\}$. 
\begin{definition}\label{def1}
Given that $\phi^k, \phi^{k-1} \in H^1(\Omega)$, $\mathbf{u}^k, \mathbf{u}^{k-1} \in \mathbf{H^1}(\Omega)$, and $p^k\in H^1(\Omega)\cap L_0^2(\Omega)$ for $k=1,2, \cdots K=[T/\delta t]$, the triple $\{\phi^{k+1}, \mu^{k+\frac{1}{2}}, \baru^{k+\frac{1}{2}}\}$ is said to be a weak solution of Eqs. \eqref{2ndCHNSa}-\eqref{2ndCHNSc} if they satisfy
\begin{align*}
&\phi^{k+1} \in H^1(\Omega), \quad \mu^{k+\frac{1}{2}} \in H^1(\Omega), 
\quad \baru^{k+\frac{1}{2}} \in \mathbf{H^1_0}(\Omega),
\end{align*}
and there hold, $\forall v\in H^1(\Omega), \varphi \in H^1(\Omega), \mathbf{v} \in \mathbf{H^1_0}(\Omega)$,
\begin{align}
&\big(\phi^{k+1}-\phi^k,v\big)+\delta t \big(M(\tphi^{k+\frac{1}{2}})\nabla \mu^{k+\frac{1}{2}}, \nabla v\big)-\delta t \big(\tphi^{k+\frac{1}{2}}{\baru}^{k+\frac{1}{2}}, \nabla v\big)=0, \label{2ndCHNSwa}\\
&\big(\mu^{k+\frac{1}{2}}, \varphi \big)=\frac{1}{4}\bigg([(\phi^{k+1})^2+(\phi^k)^2](\phi^{k+1}+\phi^k), \varphi\bigg)-\big(\tphi^{k+\frac{1}{2}}, \varphi \big) \nonumber \\
&+\frac{\epsilon^2}{2} \big(\nabla (\phi^{k+1}+\phi^k), \nabla \varphi \big),  \label{2ndCHNSwb}\\
&2\big({\baru}^{k+\frac{1}{2}}-\mathbf{u}^k, \mathbf{v} \big)+\delta t \frac{1}{Re}\big( \nabla{\baru}^{k+\frac{1}{2}}, \nabla\mathbf{v}\big)+\delta t b(\tu^{k+\frac{1}{2}}, {\baru}^{k+\frac{1}{2}}, \mathbf{v}) \nonumber \\
& =-\delta t\Big(\nabla p^k, \mathbf{v}\Big)-\delta t \frac{\epsilon^{-1}}{\sw} \big(\tphi^{k+\frac{1}{2}} \nabla \mu^{k+\frac{1}{2}}, \mathbf{v}\big),  \label{2ndCHNSwc}
\end{align}
\end{definition}
where the trilinear form $b$ is defined in \eqref{trilinear}. 


We will mainly use the well-known Browder-Minty lemma in establishing the unconditionally unique solvability of Eqs. \eqref{2ndCHNSwa}-\eqref{2ndCHNSwc}, cf. \cite{ReRo2004}, p.364, Theorem 10.49.
\begin{lemma}[Browder-Minty]\label{BrowderMinty}
Let $X$ be a real, reflexive Banach
space and let $T : X \rightarrow X^\prime$ (the dual space of $X$) be bounded, continuous, coercive and monotone.
Then for any $g \in X^\prime$ there exists a solution $u \in X$ of the equation
\begin{align*}
T(u)=g.
\end{align*}
If further, the operator $T$ is strictly monotone, then the solution $u$ is unique.
\end{lemma}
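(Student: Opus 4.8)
The plan is to establish the Browder--Minty lemma by the classical Galerkin approximation combined with Minty's monotonicity trick. I may assume $X$ is separable (this is the case in our application, where $X$ is a closed subspace built from $H^1$ spaces; the general reflexive case reduces to a separable closed subspace). Fix a sequence $w_1, w_2, \dots$ whose finite linear combinations are dense in $X$, and set $X_n := \mathrm{span}\{w_1,\dots,w_n\}$. First I would solve the problem in finite dimensions: seek $u_n = \sum_{j=1}^n \xi_j w_j \in X_n$ with $\langle T(u_n)-g, w_i\rangle = 0$ for $i=1,\dots,n$. Reading the left-hand side as a continuous field $F_n:\mathbb{R}^n\to\mathbb{R}^n$ in the coordinates $\xi$, coercivity gives $F_n(\xi)\cdot\xi = \langle T(u_n)-g, u_n\rangle \ge \big(c(\|u_n\|_X)-\|g\|_{X'}\big)\|u_n\|_X$, where $c(r)\to\infty$; this is strictly positive once $\|u_n\|_X$ (equivalently $|\xi|$) exceeds some $R$. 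The standard ``outward-pointing field'' corollary of Brouwer's fixed point theorem then yields a zero $u_n$ of $F_n$, and the same estimate gives a uniform a priori bound $\|u_n\|_X \le C$.

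Next I would extract limits. By reflexivity and the uniform bound, a subsequence satisfies $u_n \rightharpoonup u$ weakly in $X$; since $T$ is bounded, $\{T(u_n)\}$ is bounded in $X'$, so along a further subsequence $T(u_n)\rightharpoonup\chi$ weakly in $X'$. For each fixed $i$ the Galerkin identity reads $\langle T(u_n), w_i\rangle = \langle g, w_i\rangle$ for all $n\ge i$; letting $n\to\infty$ and invoking density of $\bigcup_n X_n$ gives $\chi = g$, i.e. $T(u_n)\rightharpoonup g$. It remains to identify $\chi = T(u)$, and this is exactly where the genuine difficulty lies: weak convergence of $u_n$ does not, on its own, pass through the nonlinear operator $T$.

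The resolution, which I expect to be the crux of the argument, is Minty's trick. From monotonicity, for every $v\in X$,
\[
0 \le \langle T(u_n) - T(v),\, u_n - v\rangle = \langle T(u_n), u_n\rangle - \langle T(u_n), v\rangle - \langle T(v), u_n - v\rangle .
\]
Because $u_n\in X_n$, the Galerkin relation gives $\langle T(u_n), u_n\rangle = \langle g, u_n\rangle \to \langle g, u\rangle$, while $\langle T(u_n), v\rangle \to \langle g, v\rangle$ (from $T(u_n)\rightharpoonup g$) and $\langle T(v), u_n - v\rangle \to \langle T(v), u - v\rangle$ (from $u_n\rightharpoonup u$ with $T(v)$ fixed). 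Passing to the limit yields $\langle g - T(v),\, u - v\rangle \ge 0$ for all $v\in X$. Finally I set $v = u - t z$ with $t>0$ and $z\in X$ arbitrary, divide by $t$, and let $t\to 0^+$; continuity of $T$ gives $T(u-tz)\to T(u)$ in $X'$ and hence $\langle g - T(u),\, z\rangle \ge 0$. Replacing $z$ by $-z$ forces $\langle g - T(u),\, z\rangle = 0$ for all $z$, so $T(u)=g$.

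Uniqueness under strict monotonicity is then immediate: if $T(u_1)=T(u_2)=g$ with $u_1\ne u_2$, strict monotonicity gives $\langle T(u_1)-T(u_2),\, u_1-u_2\rangle > 0$, contradicting that the left-hand side equals $\langle g-g, u_1-u_2\rangle = 0$. The main obstacle throughout is the nonlinear limit passage that identifies $\chi$ with $T(u)$; the three structural hypotheses align precisely against it, with coercivity supplying both finite-dimensional solvability and the a priori bound, reflexivity supplying the weak compactness, and monotonicity supplying Minty's trick that closes the limit.
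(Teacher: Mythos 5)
The paper does not prove this lemma at all: it is quoted as a known result with a citation to Renardy--Rogers (Theorem 10.49, p.~364), so there is no in-paper argument to compare against. Your Galerkin-plus-Minty proof is the standard textbook proof of that cited theorem, and it is essentially correct: the finite-dimensional solvability via the outward-pointing-field consequence of Brouwer, the uniform bound from coercivity, the weak extraction $u_n\rightharpoonup u$ and $T(u_n)\rightharpoonup g$, the monotonicity inequality passed to the limit, and the hemicontinuity step $v=u-tz$, $t\to 0^+$ are all in order, as is the one-line uniqueness argument under strict monotonicity. The only point worth flagging is the one you already flag yourself: the reduction of the general reflexive case to the separable case is not as immediate as ``restrict to a separable closed subspace,'' since solving the restricted problem on a subspace $Y$ only yields $\langle T(u)-g,v\rangle=0$ for $v\in Y$; the genuinely non-separable case is usually handled by running the finite-dimensional approximation over the net of all finite-dimensional subspaces. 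This is immaterial here because the space actually used in Proposition 3.3 is $H^1(\Omega)$, which is separable, so your proof covers everything the paper needs.
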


We observe that Eqs. \eqref{2ndCHNSwa}-\eqref{2ndCHNSwc} are coupled together through $\mu^{k+\12}$. It is possible to rewrite the system equivalently as a scalar equation in terms of unknown $\mu^{k+\12}$. To do so, we introduce two solution operators $\phi^{k+1}(\mu^{k+\12}): \mu^{k+\12} \rightarrow \phi^{k+1}$ and $\baru^{k+\12}(\mu^{k+\12}): \mu^{k+\12} \rightarrow \baru^{k+\12}$ by solving equations \eqref{2ndCHNSwb} and \eqref{2ndCHNSwc}, respectively, for a given source function $\mu^{k+\frac{1}{2}} \in H^1(\Omega)$. Specifically, one can establish the following lemmas.

\begin{lemma}[solvability of Eq. \eqref{2ndCHNSwb}]\label{wellposed2nd}
Given a source function $\mu^{k+\frac{1}{2}} \in H^1(\Omega)$ and known functions $\phi^k, \phi^{k-1} \in H^1(\Omega)$, there exists a unique solution $\phi^{k+1} \in H^1(\Omega)$ to Eq. \eqref{2ndCHNSwb}. Moreover, the solution is bounded and depends continuously on $\mu^{k+\frac{1}{2}}$ in the weak topology.
\end{lemma}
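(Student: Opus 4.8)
The plan is to treat \eqref{2ndCHNSwb} as a single nonlinear operator equation $T_2(\phi^{k+1})=g$ in the Hilbert space $X=H^1(\Omega)$ and to apply the Browder--Minty Lemma \ref{BrowderMinty}. Regarding $\phi^k,\phi^{k-1}$ (hence $\tphi^{k+\frac12}$) and $\mu^{k+\frac12}$ as given data, I would define $T_2:H^1(\Omega)\to (H^1(\Omega))'$ by
\[
\langle T_2(\psi),\varphi\rangle:=\tfrac14\big([\psi^2+(\phi^k)^2](\psi+\phi^k),\varphi\big)+\tfrac{\epsilon^2}{2}\big(\nabla\psi,\nabla\varphi\big),
\]
and the data $g\in(H^1(\Omega))'$ by $\langle g,\varphi\rangle:=(\mu^{k+\frac12}+\tphi^{k+\frac12},\varphi)-\tfrac{\epsilon^2}{2}(\nabla\phi^k,\nabla\varphi)$, so that a weak solution of \eqref{2ndCHNSwb} is exactly a solution $\psi=\phi^{k+1}$ of $T_2(\psi)=g$. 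The structural point that makes the scheme work is that $T_2$ is the gradient of the convex functional $J(\psi)=\int_\Omega G(\psi)\,dx+\tfrac{\epsilon^2}{4}\|\nabla\psi\|_{L^2}^2$, where $G'(\psi)=\tfrac14[\psi^2+(\phi^k)^2](\psi+\phi^k)$; indeed $G''(\psi)=\tfrac14(3\psi^2+2\psi\phi^k+(\phi^k)^2)\ge0$, since its discriminant in $\psi$ is $-8(\phi^k)^2\le0$. This is precisely the convexity underlying the convex-splitting construction, and it is what delivers monotonicity.

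I would then verify the four hypotheses of Lemma \ref{BrowderMinty} in turn. For \emph{monotonicity} (the crux), a direct factorization gives
\[
\langle T_2(\psi_1)-T_2(\psi_2),\psi_1-\psi_2\rangle=\int_\Omega\tfrac14(\psi_1-\psi_2)^2\,Q\,dx+\tfrac{\epsilon^2}{2}\|\nabla(\psi_1-\psi_2)\|_{L^2}^2,
\]
with $Q=\psi_1^2+\psi_1\psi_2+\psi_2^2+\phi^k(\psi_1+\psi_2)+(\phi^k)^2$. The elementary fact I would use is that $Q$ is a positive-definite quadratic form in $(\psi_1,\psi_2,\phi^k)$ (its matrix has eigenvalues $2,\tfrac12,\tfrac12$), so both terms are nonnegative and the pairing vanishes only when $\psi_1=\psi_2$, giving strict monotonicity. \emph{Coercivity} follows by testing with $\psi$ itself: the quartic term dominates $\tfrac18\|\psi\|_{L^4}^4$ after absorbing the lower-order cubic and quadratic contributions by Young's inequality, and together with $\tfrac{\epsilon^2}{2}\|\nabla\psi\|_{L^2}^2$ and $\|\psi\|_{L^2}^2\le|\Omega|^{1/2}\|\psi\|_{L^4}^2$ this bounds $\langle T_2(\psi),\psi\rangle$ below by a quantity growing super-linearly in $\|\psi\|_{H^1}$. \emph{Boundedness} and \emph{continuity} of $T_2$ reduce to the cubic Nemytskii operator: since $d\le3$ gives $H^1(\Omega)\hookrightarrow L^6(\Omega)\hookrightarrow L^{18/5}(\Omega)$ and $\psi\mapsto\psi^3$ maps $L^{18/5}$ continuously and boundedly into $L^{6/5}=(L^6)'$, the nonlinear term is bounded and continuous from $H^1$ into $(H^1)'$, the linear gradient term being trivially so. Lemma \ref{BrowderMinty} then yields a unique $\phi^{k+1}\in H^1(\Omega)$.

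The a priori bound is read off from the same coercivity estimate: testing $T_2(\phi^{k+1})=g$ with $\varphi=\phi^{k+1}$ and using $\langle g,\phi^{k+1}\rangle\le\|g\|_{(H^1)'}\|\phi^{k+1}\|_{H^1}$ gives $\|\phi^{k+1}\|_{H^1}\le C(\|\mu^{k+\frac12}\|_{H^1},\|\phi^k\|_{H^1},\|\phi^{k-1}\|_{H^1})$. For weak continuous dependence on $\mu^{k+\frac12}$, I would take $\mu_n\rightharpoonup\mu$ in $H^1(\Omega)$; being weakly convergent the sequence is bounded, so by the a priori estimate the solutions $\phi_n:=\phi^{k+1}(\mu_n)$ are bounded in $H^1(\Omega)$ and a subsequence satisfies $\phi_{n_j}\rightharpoonup\phi^\ast$. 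Passing to the limit in \eqref{2ndCHNSwb} is routine for the linear terms; for the cubic term I would invoke the compact Rellich embedding $H^1(\Omega)\hookrightarrow\hookrightarrow L^p(\Omega)$ ($p<6$) to upgrade to strong $L^p$ convergence, whence $\phi_{n_j}^3\to(\phi^\ast)^3$ and $\phi^\ast$ solves the equation with datum $\mu$. Uniqueness forces $\phi^\ast=\phi^{k+1}(\mu)$, and since every subsequence admits a further subsequence with this same weak limit, the whole sequence converges weakly, which is the asserted weak continuity.

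The main obstacle throughout is the cubic nonlinearity, in its two manifestations. The first is the algebraic positivity of the form $Q$, which is the heart of why the convex-splitting discretization produces a \emph{monotone} operator and hence a uniquely solvable equation. The second is that weak $H^1$ convergence does not by itself pass through the cubic term, so the weak continuous-dependence statement genuinely requires the Rellich compactness argument rather than a soft weak-limit argument. The restriction $d\le3$ enters only to keep the cubic term subcritical for the $H^1\hookrightarrow L^6$ embedding, ensuring boundedness and continuity of $T_2$.
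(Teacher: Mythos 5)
Your proof is correct and follows essentially the route the paper intends: the paper explicitly states that this lemma is proved by Browder--Minty applied to the semilinear elliptic equation with cubic nonlinearity (omitting the details), and the monotonicity factorization you derive is exactly the one appearing in the proof of Proposition 3.3, where your quadratic form $Q$ is written as the manifestly nonnegative combination $\tfrac{1}{2}\big[(\psi_1+\psi_2)^2+(\psi_1+\phi^k)^2+(\phi^k+\psi_2)^2\big]$. The only nuance worth recording is that vanishing of the gradient term alone only forces $\psi_1-\psi_2$ to be constant, and one then uses the $Q$-term to exclude a nonzero constant; this is implicit in your argument and easily supplied.
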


\begin{lemma}[solvability of Eq. \eqref{2ndCHNSwc}]\label{wellposed3rd}
Given a source function $\mu^{k+\frac{1}{2}} \in H^1(\Omega)$, known functions $\phi^k, \phi^{k-1} \in H^1(\Omega)$ and $\mathbf{u}^k, \mathbf{u}^{k-1} \in \bfH^1(\Omega)$, there exists a unique solution $\baru^{k+\frac{1}{2}} \in \mathbf{H}^1_0(\Omega)$ to Eq. \eqref{2ndCHNSwc}. In addition, the solution is bounded and depends continuously on $\mu^{k+\frac{1}{2}}$ in the strong topology.
\end{lemma}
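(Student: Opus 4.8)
The plan is to observe that, in contrast with the scalar equation for $\mu^{k+\frac12}$ that the paper handles via Browder--Minty (Lemma~\ref{BrowderMinty}), Eq.~\eqref{2ndCHNSwc} is genuinely \emph{linear} in its unknown $\baru^{k+\frac12}$: the extrapolated convection velocity $\tu^{k+\frac12}=\frac{3\mathbf{u}^k-\mathbf{u}^{k-1}}{2}$ is fixed data, so the advection enters only through the term $\delta t\, b(\tu^{k+\frac12},\baru^{k+\frac12},\mathbf{v})$, which is linear in both of its last two arguments. I would therefore recast \eqref{2ndCHNSwc} as $a(\baru^{k+\frac12},\mathbf{v})=F(\mathbf{v})$ for all $\mathbf{v}\in\mathbf{H}^1_0(\Omega)$, with bilinear form
$$a(\mathbf{w},\mathbf{v})=2(\mathbf{w},\mathbf{v})+\frac{\delta t}{Re}(\nabla\mathbf{w},\nabla\mathbf{v})+\delta t\, b(\tu^{k+\frac12},\mathbf{w},\mathbf{v})$$
and linear functional $F(\mathbf{v})=2(\mathbf{u}^k,\mathbf{v})-\delta t(\nabla p^k,\mathbf{v})-\delta t\frac{\epsilon^{-1}}{\sw}(\tphi^{k+\frac12}\nabla\mu^{k+\frac12},\mathbf{v})$, and then invoke the Lax--Milgram theorem on $\mathbf{H}^1_0(\Omega)$ (equivalently Browder--Minty, since a bounded coercive linear operator is strictly monotone).

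The next step is to verify the hypotheses. Coercivity is the cleanest point and is exactly where the structural design of the scheme pays off: the skew symmetry recorded after \eqref{trilinear} gives $b(\tu^{k+\frac12},\mathbf{w},\mathbf{w})=0$ for every $\mathbf{w}\in\mathbf{H}^1_0(\Omega)$ --- crucially \emph{without} requiring $\tu^{k+\frac12}$ to be divergence free --- so that $a(\mathbf{w},\mathbf{w})=2\|\mathbf{w}\|_{L^2}^2+\frac{\delta t}{Re}\|\nabla\mathbf{w}\|_{L^2}^2\geq c\,\|\mathbf{w}\|_{H^1}^2$ unconditionally in $\delta t$. For boundedness of $a$ the only nonroutine term is the trilinear one, for which I would use the standard estimate $|b(\mathbf{a},\mathbf{b},\mathbf{c})|\leq C\|\mathbf{a}\|_{H^1}\|\mathbf{b}\|_{H^1}\|\mathbf{c}\|_{H^1}$ valid for $d=2,3$ via $\mathbf{H}^1\hookrightarrow\mathbf{L}^6\hookrightarrow\mathbf{L}^3$, giving a constant involving $\|\tu^{k+\frac12}\|_{H^1}$. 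Boundedness of $F$ on $\mathbf{H}^1_0$ reduces to showing the capillary forcing $\tphi^{k+\frac12}\nabla\mu^{k+\frac12}$ lies in the dual: since $\tphi^{k+\frac12}\in H^1\hookrightarrow L^6$ and $\nabla\mu^{k+\frac12}\in L^2$, Hölder gives $\tphi^{k+\frac12}\nabla\mu^{k+\frac12}\in L^{3/2}=(L^3)'$, which pairs with $\mathbf{v}\in\mathbf{H}^1_0\hookrightarrow\mathbf{L}^3$; the terms with $\mathbf{u}^k\in\mathbf{L}^2$ and $\nabla p^k\in\mathbf{L}^2$ are trivially bounded. Lax--Milgram then yields existence and uniqueness of $\baru^{k+\frac12}\in\mathbf{H}^1_0(\Omega)$.

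For the two quantitative claims, the a priori bound follows by testing with $\mathbf{v}=\baru^{k+\frac12}$ and using coercivity together with the Hölder--Sobolev estimate above, producing a bound on $\|\baru^{k+\frac12}\|_{H^1}$ in terms of $\|\mathbf{u}^k\|_{L^2}$, $\|\nabla p^k\|_{L^2}$ and $\|\tphi^{k+\frac12}\|_{H^1}\|\nabla\mu^{k+\frac12}\|_{L^2}$. Continuous dependence in the strong topology is then immediate from linearity: for two sources $\mu_1,\mu_2$ the difference $\mathbf{e}=\baru_1^{k+\frac12}-\baru_2^{k+\frac12}$ satisfies $a(\mathbf{e},\mathbf{v})=-\delta t\frac{\epsilon^{-1}}{\sw}(\tphi^{k+\frac12}\nabla(\mu_1-\mu_2),\mathbf{v})$, and taking $\mathbf{v}=\mathbf{e}$ with coercivity gives $\|\mathbf{e}\|_{H^1}\le C\|\nabla(\mu_1-\mu_2)\|_{L^2}\le C\|\mu_1-\mu_2\|_{H^1}$, i.e. the solution map $\mu^{k+\frac12}\mapsto\baru^{k+\frac12}$ is Lipschitz, hence strongly continuous.

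I expect the only place demanding genuine care to be the three-dimensional Sobolev bookkeeping --- controlling both the convection term $b(\tu^{k+\frac12},\cdot,\cdot)$ and the forcing $\tphi^{k+\frac12}\nabla\mu^{k+\frac12}$ --- where the Hölder exponents close but with little slack at $d=3$. Everything else is purely structural: the coercivity, unique solvability, and Lipschitz dependence all hold unconditionally in the time step $\delta t$.
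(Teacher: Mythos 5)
Your proposal is correct and follows exactly the route the paper indicates: the paper simply states that Lemma~\ref{wellposed3rd} follows from the Lax--Milgram theorem and omits the details, which you have filled in accurately (skew-symmetry of $b$ giving unconditional coercivity, the Sobolev/H\"older bookkeeping for boundedness of the form and of the capillary forcing, and linearity giving Lipschitz dependence on $\mu^{k+\frac12}$).
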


It will be clear from the proof of Proposition \ref{Solva} below that the unique solvability of Eq. \eqref{2ndCHNSwb} can be proved by using Browder-Minty Lemma \ref{BrowderMinty} as well.   The boundedness and continuity of the solution readily follow from the fact that Eq. \eqref{2ndCHNSwb} is a semilinear elliptic equation for $\phi^{k+1}$ with cubic nonlinearity. Lemma \ref{wellposed3rd} can be proved by invoking the Lax-Milgram Theorem. We omit the details here for conciseness.

With the help of Lemma \ref{BrowderMinty}, Lemma \ref{wellposed2nd} and Lemma \ref{wellposed3rd}, one can prove the unique existence of a weak solution in the sense of Definition \ref{def1}.
\begin{proposition}\label{Solva}
Assume that $\phi^k, \phi^{k-1} \in H^1(\Omega)$, $\mathbf{u}^k, \mathbf{u}^{k-1} \in \bfH^1(\Omega)$, and $p^k \in H^1(\Omega)$  are known functions for $k=1,2, \cdots, K-1$. Then there exists a unique weak solution to Eqs. \eqref{2ndCHNSa}-\eqref{2ndCHNSc} in the sense of Def. \ref{def1}
\end{proposition}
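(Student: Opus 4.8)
The plan is to carry out exactly the reduction sketched before the statement: use the solution operators $\phi^{k+1}(\mu^{k+\12})$ and $\baru^{k+\12}(\mu^{k+\12})$ furnished by Lemmas \ref{wellposed2nd} and \ref{wellposed3rd} to eliminate $\phi^{k+1}$ and $\baru^{k+\12}$ from \eqref{2ndCHNSwa}--\eqref{2ndCHNSwc}, leaving a single scalar equation for $\mu^{k+\12}\in H^1(\Omega)$. Concretely, I would define $T:H^1(\Omega)\to (H^1(\Omega))'$ by
\[\langle T(\mu),v\rangle:=\big(\phi^{k+1}(\mu),v\big)+\delta t\big(M(\tphi^{k+\12})\nabla\mu,\nabla v\big)-\delta t\big(\tphi^{k+\12}\,\baru^{k+\12}(\mu),\nabla v\big),\quad v\in H^1(\Omega),\]
so that \eqref{2ndCHNSwa} reads $T(\mu^{k+\12})=g$ with $\langle g,v\rangle=(\phi^k,v)$, and $g\in (H^1(\Omega))'$ since $\phi^k\in H^1(\Omega)\subset L^2(\Omega)$. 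Everything then reduces to verifying the four hypotheses of the Browder--Minty Lemma \ref{BrowderMinty} on the reflexive space $X=H^1(\Omega)$: that $T$ is bounded, (demi)continuous, coercive and strictly monotone. Once these hold, Lemma \ref{BrowderMinty} yields a unique $\mu^{k+\12}$, and feeding it back through the two operators recovers the unique $\phi^{k+1}$ and $\baru^{k+\12}$, giving the weak solution of Definition \ref{def1}.

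Boundedness and continuity I expect to be routine. Boundedness follows term by term: the first term is controlled by $\|\phi^{k+1}(\mu)\|\L$, which stays bounded on bounded sets of $\mu$ by Lemma \ref{wellposed2nd}; the second by $\delta t\,m_2\|\nabla\mu\|\L\|\nabla v\|\L$ using $M\le m_2$; and the third by $\delta t\|\tphi^{k+\12}\baru^{k+\12}(\mu)\|\L\|\nabla v\|\L$, using the boundedness of $\baru^{k+\12}(\mu)$ from Lemma \ref{wellposed3rd} and $H^1\hookrightarrow L^4$ to make sense of the product. For (demi)continuity, if $\mu_n\to\mu$ in $H^1$ then $\phi^{k+1}(\mu_n)\rightharpoonup\phi^{k+1}(\mu)$ weakly (Lemma \ref{wellposed2nd}) and $\baru^{k+\12}(\mu_n)\to\baru^{k+\12}(\mu)$ strongly (Lemma \ref{wellposed3rd}); pairing against a fixed $v$ then gives $\langle T(\mu_n),v\rangle\to\langle T(\mu),v\rangle$, precisely the demicontinuity needed for a monotone operator.

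The heart of the argument is strict monotonicity, where the coupling structure of the scheme is decisive. Writing $\phi_i=\phi^{k+1}(\mu_i)$ and $\mathbf{w}=\baru^{k+\12}(\mu_1)-\baru^{k+\12}(\mu_2)$, I would test the difference of \eqref{2ndCHNSwb} with $\phi_1-\phi_2$ to get $(\mu_1-\mu_2,\phi_1-\phi_2)=\frac14(\text{cubic difference},\phi_1-\phi_2)+\frac{\epsilon^2}{2}\|\nabla(\phi_1-\phi_2)\|\L^2\ge 0$, the cubic term being nonnegative exactly because $f_v(\phi)=\frac14\phi^4$ is convex, so that the discrete variation $\frac14[\phi^2+(\phi^k)^2](\phi+\phi^k)$ is monotone in $\phi$. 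Next, subtracting \eqref{2ndCHNSwc} for the two inputs cancels the $\mathbf{u}^k$ and $\nabla p^k$ data and leaves a linear equation for $\mathbf{w}$; testing it with $\mathbf{v}=\mathbf{w}$ and using $b(\tu^{k+\12},\mathbf{w},\mathbf{w})=0$ gives
\[-\delta t\,\big(\tphi^{k+\12}\,\mathbf{w},\nabla(\mu_1-\mu_2)\big)=\tfrac{\sw}{\epsilon^{-1}}\Big(2\|\mathbf{w}\|\L^2+\delta t\,\tfrac{1}{Re}\|\nabla\mathbf{w}\|\L^2\Big)\ge 0.\]
Thus the cross term, rather than breaking monotonicity, is itself nonnegative; combined with $\delta t(M\nabla(\mu_1-\mu_2),\nabla(\mu_1-\mu_2))\ge\delta t\,m_1\|\nabla(\mu_1-\mu_2)\|\L^2$ this yields $\langle T(\mu_1)-T(\mu_2),\mu_1-\mu_2\rangle\ge 0$. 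For strictness, equality forces $\nabla(\mu_1-\mu_2)=0$, hence $\mu_1-\mu_2$ constant, together with $(\phi_1-\phi_2,\mu_1-\mu_2)=0$ and $\nabla(\phi_1-\phi_2)=0$; a short argument using that $\mu\mapsto\phi^{k+1}(\mu)$ is injective (two inputs with the same $\phi^{k+1}$ must agree by \eqref{2ndCHNSwb}) then forces $\mu_1=\mu_2$.

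The step I expect to be the main obstacle is coercivity, $\langle T(\mu),\mu\rangle/\|\mu\|\H\to\infty$. The diffusion term controls only $\|\nabla\mu\|\L$ (the cross term $-\delta t(\tphi^{k+\12}\baru^{k+\12}(\mu),\nabla\mu)$ is again nonnegative for its $\mu$-linear part and lower order otherwise, since $\mu\mapsto\baru^{k+\12}(\mu)$ is affine), so it gives no handle on the mean of $\mu$. The constant mode must instead be tamed through the first term: testing \eqref{2ndCHNSwb} with $\phi^{k+1}(\mu)$ shows $(\mu,\phi^{k+1}(\mu))\gtrsim\frac14\|\phi^{k+1}(\mu)\|_{L^4}^4-C$, and one checks that a large mean $\bar\mu$ forces $\phi^{k+1}(\mu)$ to grow like $\bar\mu^{1/3}$, so that $(\phi^{k+1}(\mu),\mu)$ grows super-linearly in $\|\mu\|\H$. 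Making this quantitative---splitting $\mu$ into mean and fluctuation and absorbing the mixed lower-order terms by Young's inequality---is the technical crux; the monotone cross-term cancellation above is the clean structural insight, but controlling the constant mode through the nonlinearity is where the real work lies.
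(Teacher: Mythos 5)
Your proposal is correct and follows essentially the same route as the paper: the same operator $T$ built from the solution maps of Lemmas \ref{wellposed2nd} and \ref{wellposed3rd}, the same sign computation for the cross term via testing the difference of \eqref{2ndCHNSwc} with $\mathbf{w}$ and skew-symmetry of $b$, and the same control of the mean of $\mu$ through the quartic term (the paper makes your $\bar\mu^{1/3}$ heuristic precise by testing \eqref{2ndCHNSwb} with $\varphi=1$ to get $|m(\mu)|^{4/3}\le C\|\phi\|_{L^4}^4+C$ and hence $\langle T(\mu),\mu\rangle\ge C\|\mu\|\H^{4/3}-C$). The only cosmetic difference is that you place $\phi^k$ on the right-hand side as the datum $g$ rather than inside $T$.
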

\begin{proof}
Here for notational simplicity, we will temporarily omit the the superscripts on $\phi^{k+1}, \mu^{k+\12}, \baru^{k+\12}$.

For any $\mu \in H^1(\Omega)$, one defines an operator $T: H^1(\Omega) \rightarrow (H^1(\Omega))^\prime$ such that
\begin{align}\label{Operator}
\langle T(\mu), v \rangle := \big(\phi-\phi^k,v\big)+\delta t \big(M\nabla \mu, \nabla v\big)-\delta t \big(\tphi^{k+\frac{1}{2}}\baru, \nabla v\big), \quad \forall v \in H^1(\Omega),
\end{align}
where $\langle, \rangle$ is the duality pairing between $(H^1(\Omega))^\prime$ and $H^1(\Omega)$,  $\phi$ and $\baru$ are the unique solutions to Eqs. \eqref{2ndCHNSwb} and \eqref{2ndCHNSwc}  that are defined in Lemma \ref{wellposed2nd} and Lemma \ref{wellposed3rd}, respectively.

It readily follows that
\begin{align*}
|\langle T(\mu), v \rangle|  \leq C(\delta t)\big( ||\phi||_{L^2} +||\phi^k||_{L^2}  +||\nabla \mu||\L +||\tphi^{k+\frac{1}{2}}||\H||\baru||\H\big)||\mathbf{v}||\H,
\end{align*}
where we have used the boundedness of the mobility function $m_1 \leq M \leq m_2$ for constants $0<m_1 \leq m_2$.
Thus the boundedness of the operator $T$ follows from the  boundedness of $\phi$ and $\baru$ as functions of $\mu$ in Lemmas \ref{wellposed2nd} and \ref{wellposed3rd}. Similarly, one can verify that the operator $T: H^1(\Omega)\rightarrow (H^1(\Omega))^\prime$ is continuous as a consequence of the continuity of $\phi$ and $\baru$ on $\mu$.

For the monotonicity, one obtains from the definition of $T$ in \eqref{Operator}
\begin{align}\label{Mono}
&\langle T(\mu)-T(\nu), \mu-\nu \rangle = \big(\phi(\mu)-\phi(\nu),\mu-\nu\big)+\delta t ||\sqrt{M}\nabla(\mu-\nu)||\L^2 \nonumber\\
&-\delta t \big(\tphi^{k+\frac{1}{2}}[\baru(\mu)-\baru(\nu)], \nabla (\mu-\nu)\big),   \quad \forall \mu, \nu \in H^1(\Omega),
\end{align}
where $\phi(\nu)$ and $\baru(\nu)$ are solutions to Eqs. \eqref{2ndCHNSwb} and \eqref{2ndCHNSwc}, respectively, with a given source function $\nu$.
For the first term on the right hand side of \eqref{Mono}, one subtracts Eq. \eqref{2ndCHNSwb} with source functions $\mu$ and $\nu$ respectively to get
\begin{align*}
\big(\mu-\nu, \varphi\big)&=\frac{1}{4}\int_\Omega(\phi(\mu)-\phi(\nu))[(\phi(\mu)+\phi(\nu))^2+(\phi(\mu)+\phi^k)^2+(\phi^k+\phi(\nu))^2]\varphi\, dx \\
&+\frac{\epsilon^2}{2}\big(\nabla(\phi(\mu)-\phi(\nu)), \nabla \varphi\big), \quad \forall \varphi \in H^1(\Omega).
\end{align*}
By taking $\varphi=\phi(\mu)-\phi(\nu)$ in the above equation, one concludes that
\begin{align}\label{Mono1st}
\big(\mu-\nu, \phi(\mu)-\phi(\nu)\big) \geq 0,
\end{align}
 and that the equality holds if only if $\mu=\nu$ thanks to the uniqueness of solutions to Eq. \eqref{2ndCHNSwb} in Lemma \ref{wellposed2nd}.
 By the linearity of Eq. \eqref{2ndCHNSwc}, the third term on the right hand side of  Eq. \eqref{Mono} can be written as
 \begin{align}\label{Mono2nd}
 -\delta t \big(\tphi^{k+\frac{1}{2}}[\baru(\mu)-\baru(\nu)], \nabla (\mu-\nu)\big)=\epsilon\sw\{2||\baru(\mu)-\baru(\nu)||\L^2+\frac{\delta t}{Re}||\nabla(\baru(\mu)-\baru(\nu))||\L^2\},
 \end{align}
 where the convective term vanishes thanks to the skew-symmetry of the form $b$. In view of \eqref{Mono1st} and \eqref{Mono2nd}, one sees 
 \begin{align}
 \langle T(\mu)-T(\nu), \mu-\nu \rangle \geq 0,
 \end{align}
 with equality if only if $\mu=\nu$. This establishes the strict monotonicity of the operator $T$.
 
We next turn to the coercivity of the operator $T$. One has
\begin{align}\label{Coer}
\langle T(\mu), \mu \rangle = \big(\phi-\phi^k,\mu\big)+\delta t \big(M\nabla \mu, \nabla \mu\big)-\delta t \big(\tphi^{k+\frac{1}{2}}\baru, \nabla \mu\big), \quad \forall \mu \in H^1(\Omega).
\end{align} 
Taking the test function $\varphi=\phi-\phi^k$ in Eq. \eqref{2ndCHNSwb}, one obtains
\begin{align}\label{Coer1st}
&\big(\phi-\phi^k, \mu\big) \nonumber \\
&=\frac{1}{4} \int_\Omega \phi^4-(\phi^k)^4\, dx -\int_\Omega \tphi^{k+\frac{1}{2}}(\phi-\phi^k)\, dx+\frac{\epsilon^2}{2}\int_\Omega |\nabla \phi|^2-|\nabla \phi^k|^2\, dx. \nonumber \\
&\geq \frac{1}{8} \int_\Omega \phi^4\, dx +\frac{\epsilon^2}{2}\int_\Omega |\nabla \phi|^2\,dx-C(\epsilon, \Omega)\big(||\tphi^{k+\frac{1}{2}}||\L^2+||\phi^k||\H+1\big)
\end{align}
Similarly, one can take the test function  $\mathbf{v}=\baru$ in Eq. \eqref{2ndCHNSwc} to get
\begin{align}\label{Coer2nd}
-\delta t \big(\tphi^{k+\frac{1}{2}}\baru, \nabla \mu\big)&=\epsilon \sw \{2||\baru||\L^2+\delta t ||\nabla \baru||\L^2- \delta t \big(2\mathbf{u}^k-\nabla p^k, \baru\big)\} \nonumber \\
&\geq C(\epsilon, \sw, \delta t)\{||\baru||\L^2+||\nabla \baru||\L^2-(||\mathbf{u}^k||\L^2+||\nabla p^k||\L^2)\}.
\end{align}
 Collecting inequalities \eqref{Coer1st} and \eqref{Coer2nd}, one finds that Eq. \eqref{Coer} becomes
\begin{align}\label{Coer2}
\langle T(\mu), \mu \rangle \geq C||\nabla \mu||\L^2+\frac{1}{8} ||\phi||_{L^4}^4 +\frac{\epsilon^2}{2} ||\nabla \phi||^2\L+C(||\baru||\L^2+||\nabla \baru||\L^2)- C,
\end{align}
where again the boundedness of the mobility function has been invoked.
To have coercivity in $H^1(\Omega)$, one needs to bound the average $m(\mu):=\frac{1}{|\Omega|}\int_\Omega \mu\, dx$ appropriately. For this, one takes the test function $\varphi=1$ in Eq. \eqref{2ndCHNSwb}.
\begin{align*}
\Big|\int_\Omega \mu\, dx\Big| &\leq \frac{1}{4}\int_\Omega |\phi|^3+|\phi|| \phi^k|^2+|\phi^k \phi^2|\, dx+ C(||\phi^k||_{L^3}^3+||\tphi^{k+\frac{1}{2}}||\H) \\
& \leq C(||\phi||_{L^3}^3+||\phi||\L^2+||\phi^k||\L||\phi||_{L^4}^2)+C \nonumber \\
& \leq C(||\phi||_{L^3}^3+||\phi||\L^2+||\phi||_{L^4}^3)+C \nonumber \\
&\leq C(||\phi||_{L^4}^3+||\phi||_{L^4}^2)+ C,
\end{align*}
where one has applied Young's inequality.
It readily follows that 
\begin{align}\label{Coer21}
|m(\mu)|^\frac{4}{3} \leq C||\phi||_{L^4}^4+ C
\end{align}
Thus by using Poincar\'{e} inequality, one gets from \eqref{Coer2} and \eqref{Coer21} that
\begin{align}
\langle T(\mu), \mu \rangle \geq C||\mu||\H^{\frac{4}{3}}-C,
\end{align}
which implies the coercivity of $T$.

Now Browder-Minty Lemma \ref{BrowderMinty} yields that there exists a unique solution $\mu^{\star} \in H^1(\Omega)$ such that $\langle T(\mu^{\star}), v \rangle=0, \forall v \in H^1(\Omega)$. In view of the definition of $T$ in \eqref{Operator}, one sees $\mu^\star \in H^1(\Omega)$ and the corresponding $\phi^{\star} \in H^1(\Omega), \baru^{\star} \in \mathbf{H}_0^1$ uniquely solve the system \eqref{2ndCHNSwa}-\eqref{2ndCHNSwc}. 
\end{proof}

\section{Mixed Finite Element Formulation}
We now discretize the time-discrete scheme \eqref{2ndCHNSa}-\eqref{2ndCHNSe} in space by finite element method.
Let $\mathcal{T}_h$ be a quasi-uniform triangulation of the domain $\Omega$ of mesh size $h$.  We introduce $\mathbf{X}_h$ and $Y_h$ the finite element approximations of $\mathbf{H}_0^1(\Omega)$ and $H^1(\Omega)$ respectively based on the triangulation $\mathcal{T}_h$. In addition, we define $M_h=Y_h \cap L^2_0(\Omega):= \{q_h \in Y_h; \int_{\Omega}q_h dx=0\}$. We assume that $Y_h \times Y_h$ is a stable pair for the biharmonic operator in the sense that there holds the inf-sup condition
\begin{align*}
\sup_{\phi_h \in Y_h} \frac{(\nabla \phi_h, \nabla \varphi_h)}{||\phi_h||_{H^1}} \geq c||\varphi_h||_{H^1}, \quad \forall \varphi_h \in Y_h.
\end{align*}
We also assume that $\mathbf{X}_h$ and $Y_h$ are stable approximation spaces for velocity and pressure in the sense of
\begin{align*}
\sup_{\mathbf{v}_h \in \mathbf{X}_h} \frac{(\nabla \cdot \mathbf{v}_h,  q_h)}{||\mathbf{v}_h||_{H^1}} \geq c||q_h||_{L^2}, \quad \forall q_h \in Y_h.
\end{align*}
It is pointed out \cite{GuQu1998b} that the inf-sup condition is necessary for the stability of pressure even though one may solve the projection step as a pressure Poisson equation.

Then the fully discrete finite element formulation for scheme \eqref{2ndCHNSa}-\eqref{2ndCHNSe} reads: find \linebreak $(\phi_h^{k+1}, \mu^{k+\frac{1}{2}}_h, \baru_h^{k+\frac{1}{2}}, p_h^{k+1}, \mathbf{u}_h^{k+1}) \in Y_h \times Y_h \times \mathbf{X}_h \times M_h \times \mathbf{X}_h$ such that for all $(v_h, \varphi_h, \mathbf{v}_h, q_h) \in Y_h \times Y_h \times \mathbf{X}_h \times Y_h$ there hold
\begin{align}
&\big(\phi^{k+1}_h-\phi^k_h,v_h\big)+\delta t \big(M\nabla \mu^{k+\frac{1}{2}}_h, \nabla v_h\big)-\delta t \big(\tphi^{k+\frac{1}{2}}_h\baru^{k+\frac{1}{2}}_h, \nabla v_h\big)=0, \label{d2ndCHNSwa}\\
&\big(\mu^{k+\frac{1}{2}}_h, \varphi_h \big)=\frac{1}{4}\bigg([(\phi^{k+1}_h)^2+(\phi^k_h)^2](\phi^{k+1}_h+\phi^k_h), \varphi_h\bigg)-\big(\tphi^{k+\frac{1}{2}}_h, \varphi_h \big) \nonumber\\
&+\frac{\epsilon^2}{2} \big(\nabla (\phi^{k+1}_h+\phi^k_h), \nabla \varphi_h \big),   \label{d2ndCHNSwb}\\
&\big(2\baru^{k+\frac{1}{2}}_h, \mathbf{v}_h \big)+\delta t \frac{1}{Re}\big( \nabla\baru^{k+\frac{1}{2}}_h, \nabla \mathbf{v}_h\big)+\delta t b\big(\tu^{k+\frac{1}{2}}_h, \baru^{k+\frac{1}{2}}_h, \mathbf{v}_h\big) =-\delta t\big(\nabla p^k_h, \mathbf{v}_h\big) \nonumber \\
&+\big(2\mathbf{u}^k_h, \mathbf{v}_h \big)-\delta t \frac{\epsilon^{-1}}{\sw} \big(\tphi^{k+\frac{1}{2}}_h \nabla \mu^{k+\frac{1}{2}}_h, \mathbf{v}_h\big), \label{d2ndCHNSwc}  \\
&\big(\bfu^{k+1}_h-\tu^{k+1}_h, \mathbf{v}_h\big)+ \frac{\delta t}{2}\big(\nabla (p_h^{k+1}-p_h^k), \mathbf{v}_h\big)=0, \label{d2ndCHNSwd} \\
&\big(\nabla \cdot \bfu^{k+1}_h, q_h\big)=0.  \label{d2ndCHNSwe}
\end{align}
The notations used here are defined in \eqref{notas} and Eq. \eqref{trilinear}. 

The properties of the time-discrete scheme \eqref{2ndCHNSa}-\eqref{2ndCHNSe} (i.e., mass-conservation, unconditional stability and unconditionally unique solvability) are preserved by the fully discrete formulation \eqref{d2ndCHNSwa}-\eqref{d2ndCHNSwe}. Note that Eqs. \eqref{d2ndCHNSwd}-\eqref{d2ndCHNSwe} amount to solving the projection step \eqref{2ndCHNSd}-\eqref{2ndCHNSe} as a Darcy problem. This formulation is shown \cite{GuQu1998a} to yield an optimal condition number for the pressure operator associated with finite element spatial discretizations. An alternative way of solving Eqs. \eqref{d2ndCHNSwd}-\eqref{d2ndCHNSwe} is the so-called "approximate projection" (cf. \cite{Codina2001} and references therein)
\begin{align*}
&\big(\nabla(p^{k+1}_h-p^k_h), \nabla q_h\big)=\frac{2}{\delta t} \big(\tu^{k+1}_h, \nabla q_h \big),  \forall q_h \in Y_h\\
&\big(\bfu_h^{k+1}, \mathbf{v}_h\big)=\big(\tu_h^{k+1}-\frac{\delta t}{2}\nabla(p_h^{k+1}-p_h^k), \mathbf{v}_h\big), \forall \mathbf{v}_h \in \mathbf{X}_h. 
\end{align*} 
One can still prove the unconditional stability of the scheme with the approximate projection, see the reference above. In our numerical experiment, we observe the $L^2$ error of the pressure is indeed smaller in the former case, though at the expense of more memory consumed due to the coupling between the velocity and pressure.
  


Note that the only nonlinear term appears in the chemical potential equation \eqref{d2ndCHNSwb}. We thus adopt a Picard iteration procedure on velocity to decouple the computation of the nonlinear Cahn-Hilliard equation \eqref{d2ndCHNSwa} and \eqref{d2ndCHNSwb} from that of the linear Navier-Stokes equation \eqref{d2ndCHNSwc}. Denote by $i$ the Picard iteration index. Specifically, given the velocity $\baru^{k+\12, i}$, we solve for $\phi^{k+1, i+1}, \mu^{k+\12, i+1}$ from the Cahn-Hilliard equation \eqref{d2ndCHNSwa} -- \eqref{d2ndCHNSwb} by Newton's method. As $\mu^{k+\12, i+1}$ is available, we can then proceed to solve for $\baru^{k+\12, i+1}$ from the linear equations \eqref{d2ndCHNSwc}. We repeat this procedure until the relative difference between two iterations within a fixed tolerance. We summarize this procedure in four steps  as follows:

\noindent \textbf{Step $1$}: given $\baru^{k+\12, i}$, find $(\phi^{k+1, i+1}_h, \mu^{k+\12, i+1}_h) \in Y_h \times Y_h$ such that $\forall (v_h, \varphi_h) \in Y_h \times Y_h$
\begin{align*}
&  \big(\phi^{k+1, i+1}_h-\phi^k_h,v_h\big)+\delta t \big(M\nabla \mu^{k+\frac{1}{2}, i+1}_h, \nabla v_h\big)-\delta t \big(\tphi^{k+\frac{1}{2}}_h\baru^{k+\12, i}_h, \nabla v_h\big)=0, \\
& \big(\mu^{k+\frac{1}{2}, i+1}_h, \varphi_h \big)=\frac{1}{4}\bigg([(\phi^{k+1, i+1}_h)^2+(\phi^k_h)^2](\phi^{k+1, i+1}_h+\phi^k_h), \varphi_h\bigg)-\big(\tphi^{k+\frac{1}{2}}_h, \varphi_h \big) \\
&+\frac{\epsilon^2}{2} \big(\nabla (\phi^{k+1, i+1}_h+\phi^k_h), \nabla \varphi_h \big), 
\end{align*}

\noindent \textbf{Step $2$}: find $\baru^{k+\12, i+1} \in \mathbf{X}_h$ such that $\forall \mathbf{v}_h \in \mathbf{X}_h$
\begin{align*}
&\big(2\baru^{k+\frac{1}{2}, i+1}_h, \mathbf{v}_h \big)+\delta t \frac{1}{Re}\big( \nabla\baru^{k+\frac{1}{2}, i+1}_h, \nabla \mathbf{v}_h\big)+\delta t b\big(\tu^{k+\frac{1}{2}}_h, \baru^{k+\frac{1}{2}, i+1}_h, \mathbf{v}_h\big) =-\delta t\big(\nabla p^k_h, \mathbf{v}_h\big) \\
&+\big(2\mathbf{u}^k_h, \mathbf{v}_h \big)-\delta t \frac{\epsilon^{-1}}{\sw}\big(\tphi^{k+\frac{1}{2}}_h \nabla \mu^{k+\frac{1}{2}, i+1}_h, \mathbf{v}_h\big), 
\end{align*}

\noindent \textbf{Step $3$}: find $\baru^{k+\12}_h \in \mathbf{X}_h$ by repeating Step 1 and Step 2 until the relative error in $L^2$ of $(\baru^{k+\12, i+1}_h-\baru^{k+\12, i}_h)$ is within a fixed tolerance.

\noindent \textbf{Step $4$}: find $\bfu_h^{k+1} \in \mathbf{X}_h, p_h^{k+1} \in Y_h$ (equivalently, $p_h^{k+1}-p_h^{k}$) such that $\forall \mathbf{v}_h \in \mathbf{X}_h, q_h \in Y_h$
\begin{align*}
&\big(\bfu^{k+1}_h-\tu^{k+1}_h, \mathbf{v}_h\big)+ \frac{\delta t}{2}\big(\nabla (p_h^{k+1}-p_h^k), \mathbf{v}_h\big)+\big(\nabla \cdot \bfu^{k+1}_h, q_h\big)=0.
\end{align*}
We remark that our scheme is a two step method.
One can solve for $\phi^1_h, \mu^1_h, \bfu^1_h, p^1_h$ through a coupled first order scheme (see, for example, \cite{Feng2006, KSW2008}) to initialize the second order scheme. Numerical simulations in \cite{KKL2004} suggest that at least 4 grid elements across the interfacial region of thickness $\sqrt{2}\epsilon$ are needed for accuracy. To improve the efficiency of the algorithm, we explore the capability of adaptive mesh refinement of FreeFem++ (cf. \cite{Hecht2012}) in which  a variable metric/Delaunay automatic meshing algorithm is implemented.

\section{Numerical Experiments}
In this section, we perform some standard tests to gauge our numerical algorithm. For simplicity, we will use P1--P1 function spaces for $Y_h \times Y_h$ , and P1b--P1 mixed finite element spaces for $\mathbf{X}_h \times Y_h$ . It is well-known \cite{Ciarlet2002, ABF1984} that these approximation spaces satisfy the inf-sup conditions for the biharmonic operator and Stokes operator, respectively. In principle, any inf-sup compatible approximation spaces for biharmonic operator and Stokes operator can be used, for example,  P2--P2 for $Y_h \times Y_h$, and Taylor-Hood P2--P1 for $\mathbf{X}_h \times Y_h$.

\subsection{Convergence, energy dissipation, mass conservation}
Here we provide some numerical evidence to show that our scheme is second order accurate, energy-dissipative and mass-conservative.

As the Cahn-Hilliard equation does not have a natural forcing term, we verify the second order convergence of the scheme by a Cauchy convergence test. We consider the problem in a unit square domain $\Omega=[0,1]\times[0,1]$. The initial conditions are taken to be
\begin{align*}
& \phi_0=0.24\cos(2\pi x)\cos(2\pi y)+0.4\cos(\pi x)\cos(3\pi y), \\
& \bfu_0=(-\sin(\pi x)^2\sin(2\pi y), \sin(\pi y)^2\sin(2\pi x)).
\end{align*}
We impose no-slip no penetration boundary conditions for velocity, and homogeneous Neumann boundary condition for $\phi$ and $\mu$ .

The final time is $T=0.1$, the grid in space is uniform $h=\frac{\sqrt{2}}{2^n}$ ($2^n$ grid points in each direction),  for $n$ from $5$ to $9$, and the refinement path is taken to be $\delta t= \frac{0.2}{\sqrt{2}}h$. The other parameters are $\epsilon=0.04$, $M=0.1$, $\sw=25$ , $Re=100$. We calculate the the rate at witch the Cauchy difference converges to zero in the $L^2$ norm. The errors and convergence rates are given in Table \ref{CauchyP1} . The results show that the scheme is of second order accuracy for $\phi$ and $\bfu$ in $L^2$ norm, and the rate of convergence for pressure $p$ appear to be only first order.

\begin{table}[h!]
 \begin{center}
\begin{tabular}{c c c c c c c c}
 \hline 
  & $32-64$ & rate & $64-128$ & rate • &$128-256$  & rate  & $256-512$ \\ 
 \hline 
 $\phi$• & $4.14e-3$ & $1.90$ & $1.11-3$ & $1.97$ & $2.83e-4$ & $1.99$& $7.12e-5$ \\ 

 $u$ & $7.21e-4$• &$2.08$  &$1.70e-4$  &$  2.04$  & $4.16e-5$ &$ 2.02$ & $1.03e-5$ \\ 

 $v$ & $6.99e-4$ & $ 2.11 $ & $1.62e-4$ & $ 2.05$ &$3.93e-5$  & $2.02$ & $9.71e-6$ \\ 
 
 $p$ & $2.05e-3$ & $1.75$ & $6.10e-4$& $ 1.62$ & $1.98e-4$ & $ 1.44$ &$7.27e-5$ \\ 
 \hline 
 \end{tabular}  
 \caption{Cauchy convergence test; errors are measured in $L^2$ norm; $2^n$ grid points in each direction for $n$ from $5$ to $9$,  $\delta t= \frac{0.2}{2}h$, $\epsilon=0.04$, $M=0.1$, $\sw=25$ , $Re=100$.}
 \label{CauchyP1}
  \end{center}
  \end{table}

Next, we verify numerically that the total energy of the system is non-increasing at each time step. We define two discrete energy functional at discrete time $t=k \delta t$ according to Proposition \ref{ModEnergyLaw}
\begin{align*}
&E^{h,t}=\int_{\Omega}\frac{1}{2}|\mathbf{u}_h^k|^2\, dx+ \frac{1}{\sw}\int_{\Omega} \big( \frac{1}{\epsilon} f_0(\phi_h^k)+\frac{\epsilon}{2}|\nabla \phi_h^k|^2\big)\, dx, \\
&E^{h,t}_{app}=E^{h,t}+\frac{\epsilon^{-1}}{4\sw}\int_{\Omega}|\phi^{k}_h-\phi^{k-1}_h|^2 dx+\frac{\delta t^2}{8}\int_{\Omega}|\nabla p^{k}_h|^2 dx.
\end{align*}
 In the calculation, we take $\delta t= 0.005$, $h=\frac{\sqrt{2}}{128}$ and a constant mobility $M=1.0$. The other parameters are the same as ones in the Cauchy convergence test. Fig. \ref{DEn} shows that both of the discrete energy functional $E^{h,t}$ and $E^{h,t}_{app}$ are indeed non-increasing in time. Moreover, since $E^{h,t}_{app}$ is a second order approximation of $E^{h,t}$ in terms of $\delta t$, the qualitative evolution behaviour of $E^{h,t}$ and $E^{h,t}_{app}$ is virtually the same.

\begin{figure}
        \centering
        \begin{subfigure}[b]{0.5\textwidth}
                \includegraphics[width=\textwidth]{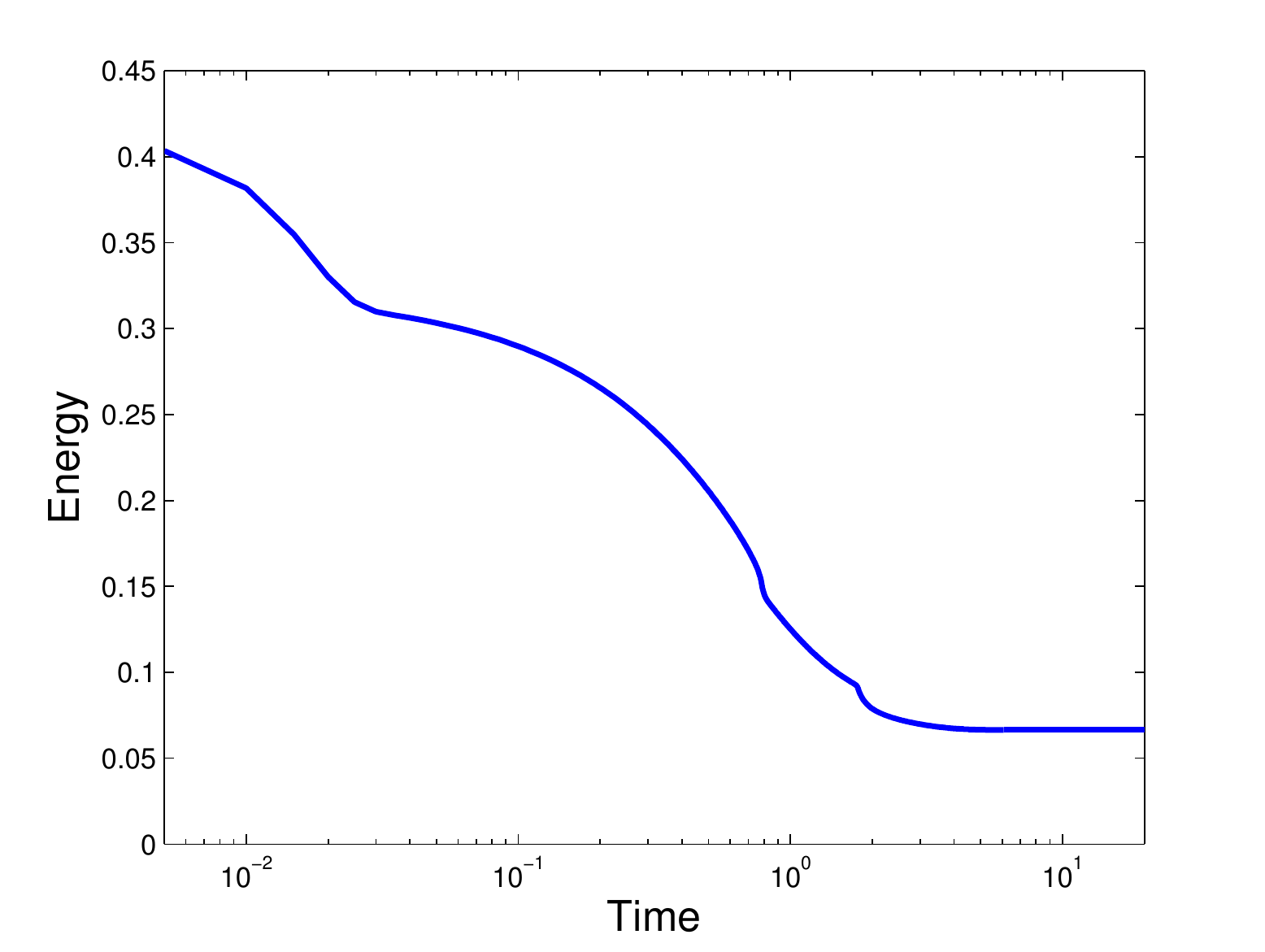}
                \caption{Evolution of $E^{h,t}$}
                \label{DEnTrue}
        \end{subfigure}%
        ~ 
        \begin{subfigure}[b]{0.5\textwidth}
                \includegraphics[width=\textwidth]{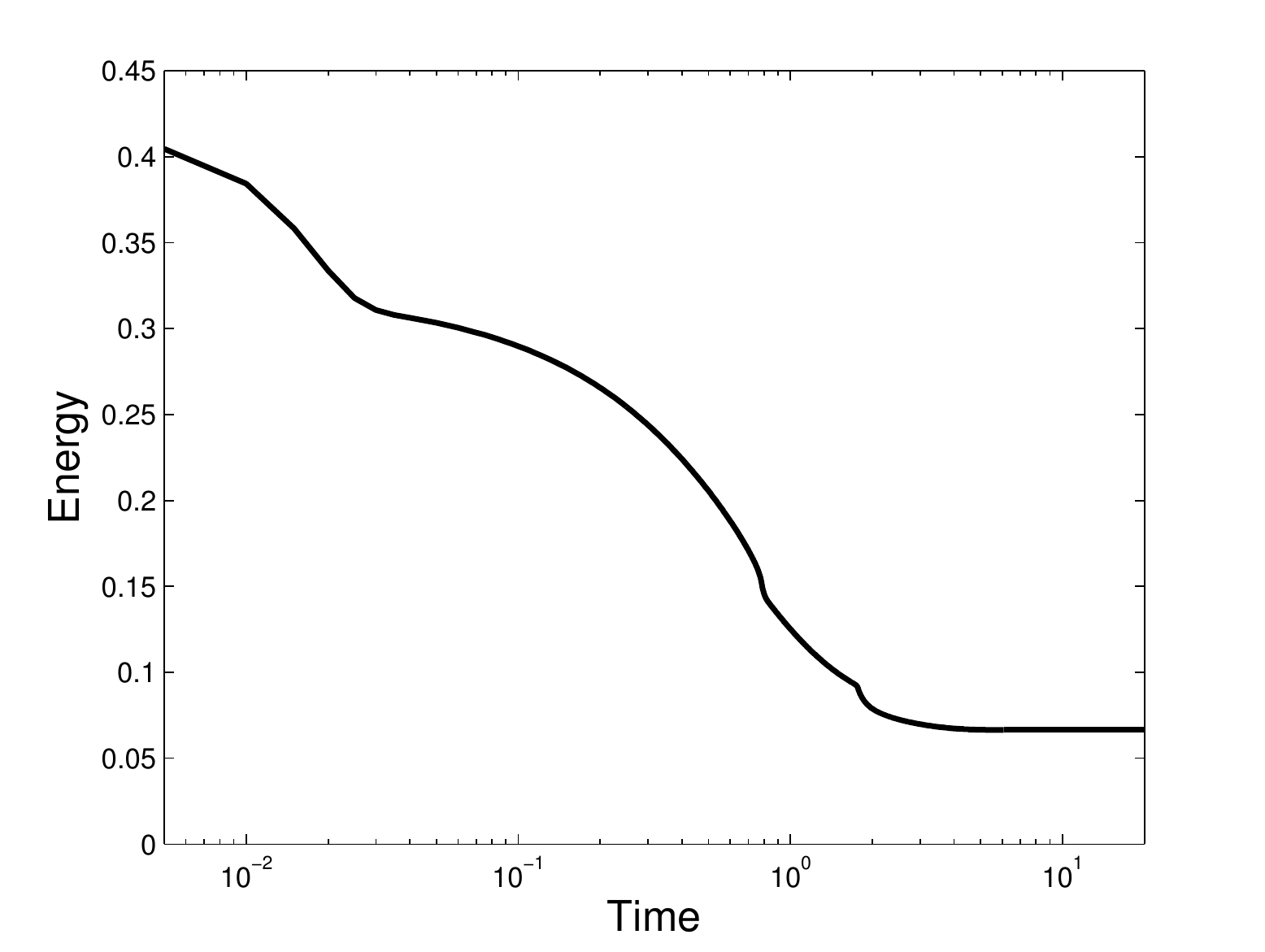}
                \caption{Evolution of $E^{h,t}_{app}$}
                \label{DEnNum}
        \end{subfigure}
        \caption{Time evolution of the discrete energy; $\delta t= 0.005$, $h=\frac{\sqrt{2}}{128}$, $M=1.0$, $\epsilon=0.04$,  $\sw=25$ , $Re=100$.}
        \label{DEn}
\end{figure}

In Fig. \ref{Mass},  we show the time evolution of the discrete mass $\int_{\Omega}\phi_h^k dx$ associated with the energy dissipation test (Fig. \ref{DEn}).   Note that $\int_{\Omega}\phi_0 dx=0$. After projection into the finite element space P1, we have $\int_{\Omega} \phi_h^0dx =8.14e-06$.  Fig. \ref{Mass} shows that the exact value is preserved during the evolution, which verifies that our scheme is conservative.

\begin{figure}[h!]
\centering
 \includegraphics[width=0.5\linewidth]{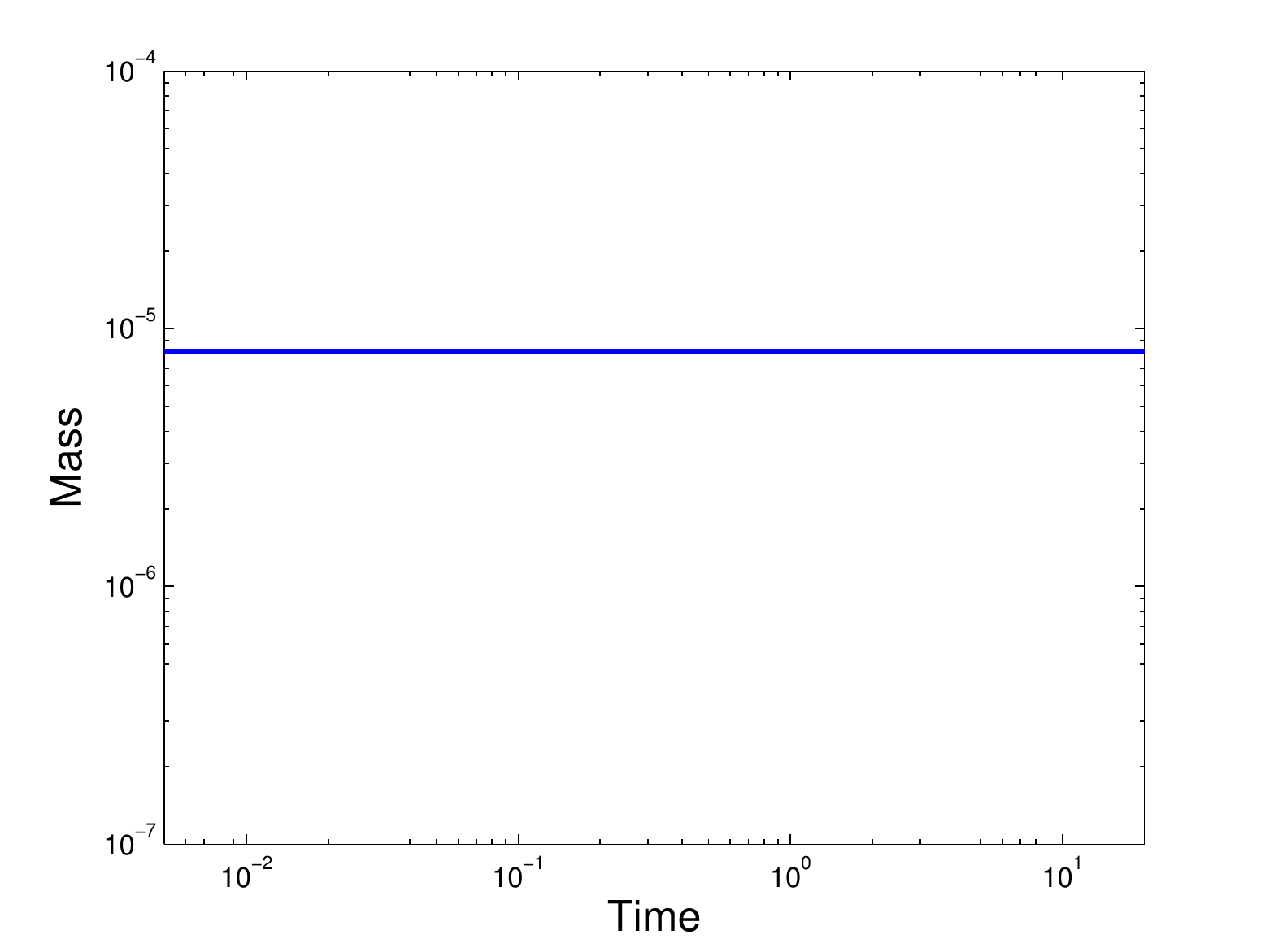}
 \caption{Time evolution of the discrete mass $\int_\Omega \phi_h^k dx$; the parameters are given in Fig. \ref{DEn}.}
  \label{Mass}
\end{figure}

\subsection{Shape relaxation}
Here we use the CHNS system \eqref{CH}-\eqref{div} to simulate the relaxation of an isolated shape in a two-phase flow system. The initial shape is a small square located in the middle of the domain (cf. Fig. \ref{IniSh}). For velocity, we set both the initial condition and boundary condition to be zero. We impose homogeneous Neumann boundary conditions fro $\phi$ and $\mu$. The parameters are $\epsilon=0.005$, $\sw=200$, $M(\phi)=0.1\sqrt{(1-\phi^2)^2+\epsilon^2}$, $Re=10$, $\delta t=0.005$. In space, we explore the adaptive mesh refinement of FreeFem++ (cf. \cite{Hecht2012}) which uses a variable metric/Delaunay automatic meshing algorithm. Specifically, we adapt the mesh according to the Hessian of the order parameter such that at least four grid cells are located across the diffuse interface.

\begin{figure}[h!]
\centering
 \includegraphics[width=0.3\linewidth]{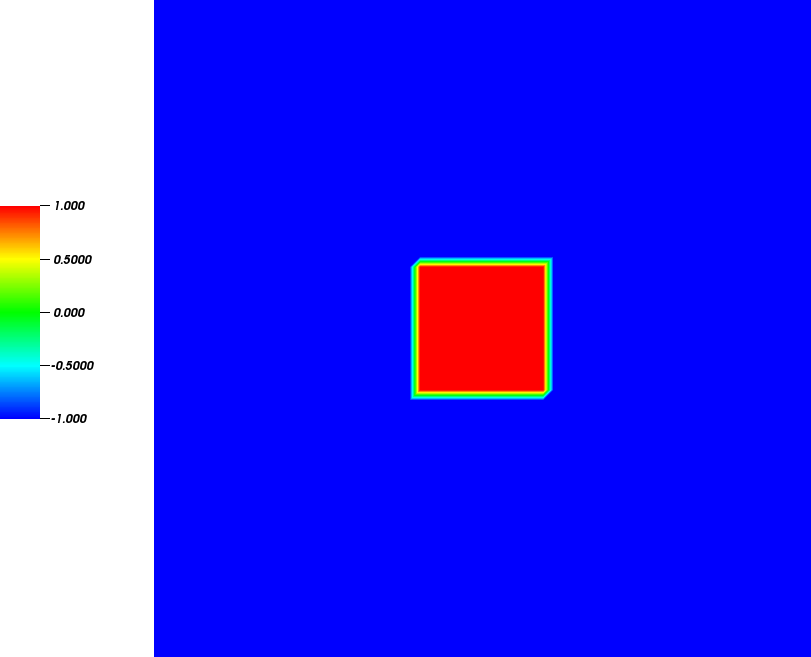}
 \caption{The initial shape of the order parameter for simulations of shape relaxation.}
  \label{IniSh}
\end{figure}

Since the initial velocity is zero, the initial total energy of the system is the surface energy.   Due to the effect of surface tension and the isotropy of the mobility, isolated irregular shape will relax to a circular shape. This relaxation is observed in Fig. \ref{ShaRE}. We also show the effectiveness of the adaptive mesh refinement at $t=0.02$ and $t=0.4$ in Fig. \ref{AdpM}.

\begin{figure}[h!]
\centering
\begin{tabular}{cc}
   \includegraphics[width=0.3\linewidth]{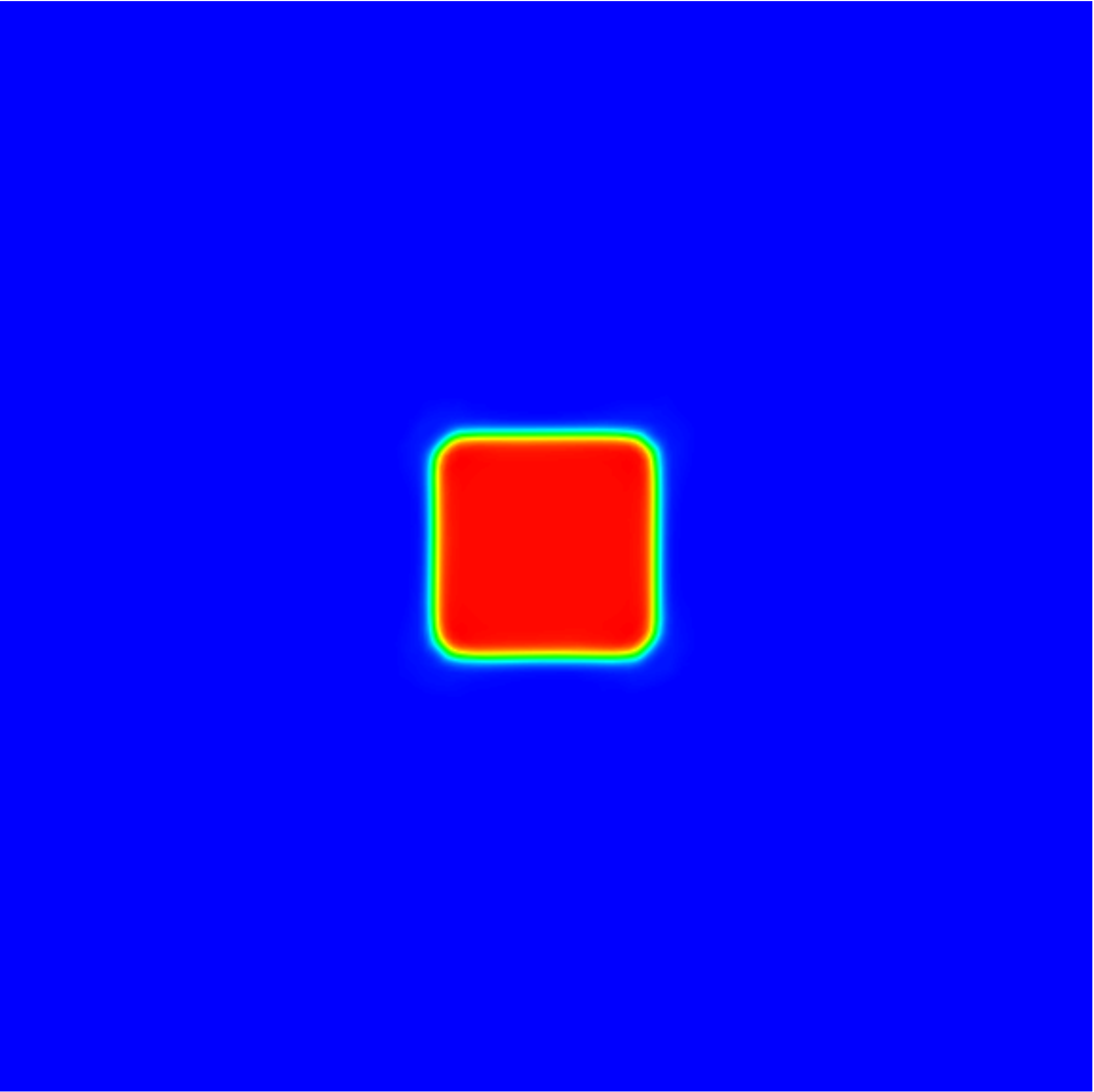}& \includegraphics[width=0.3\linewidth]{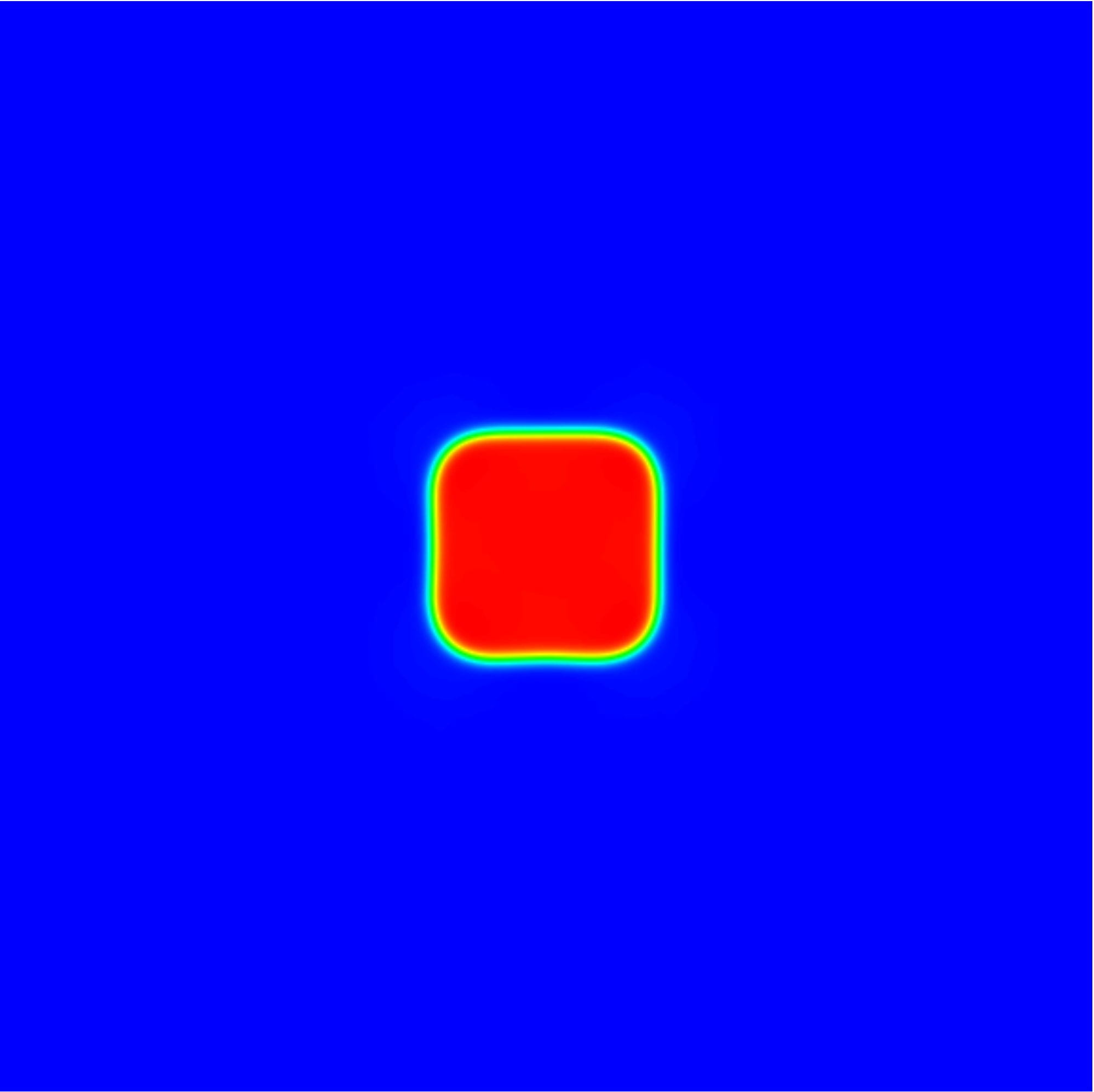}\\
   $t=0.02$ & $t=0.1$  \\
   \includegraphics[width=0.3\linewidth]{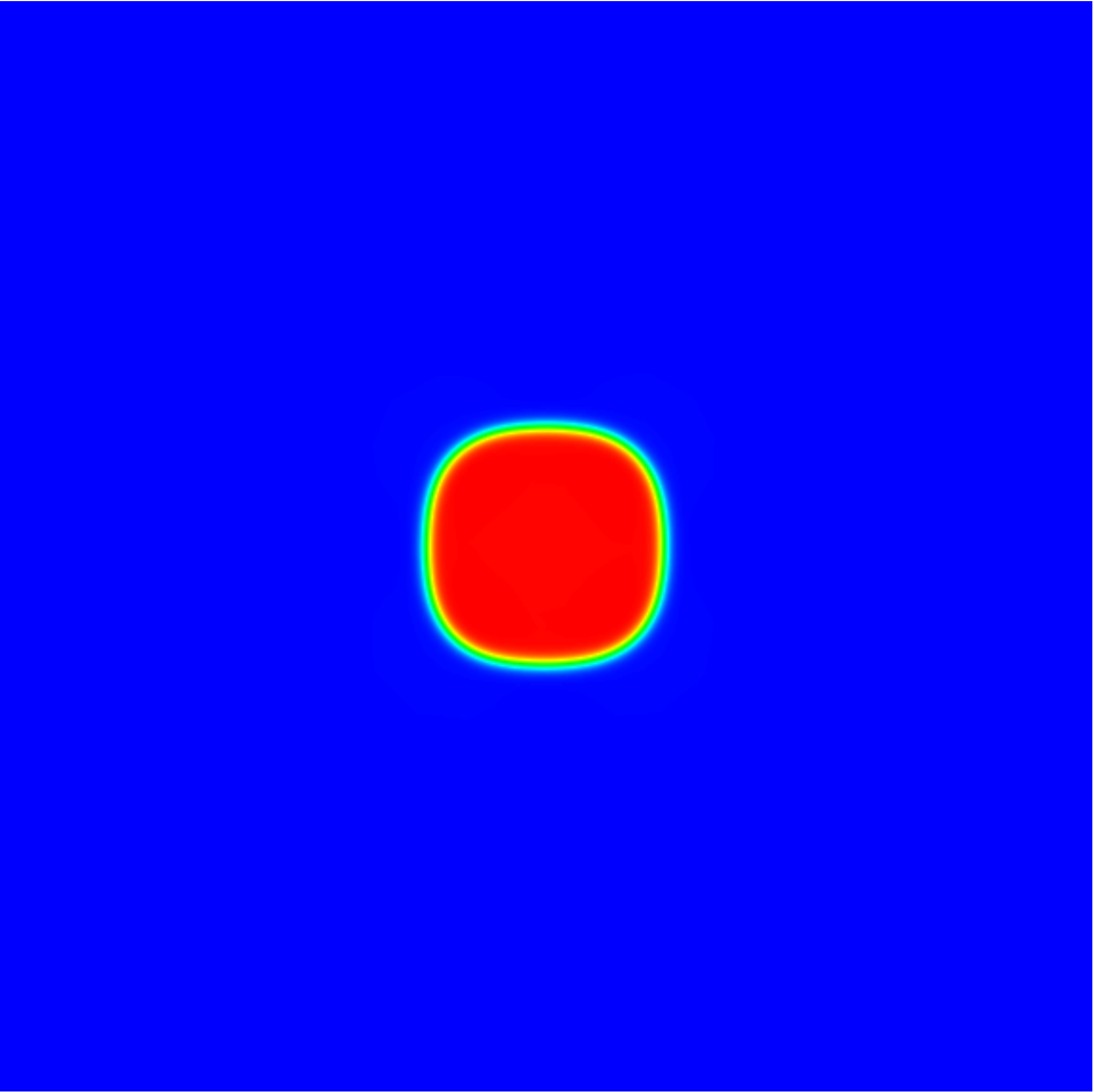}& \includegraphics[width=0.3\linewidth]{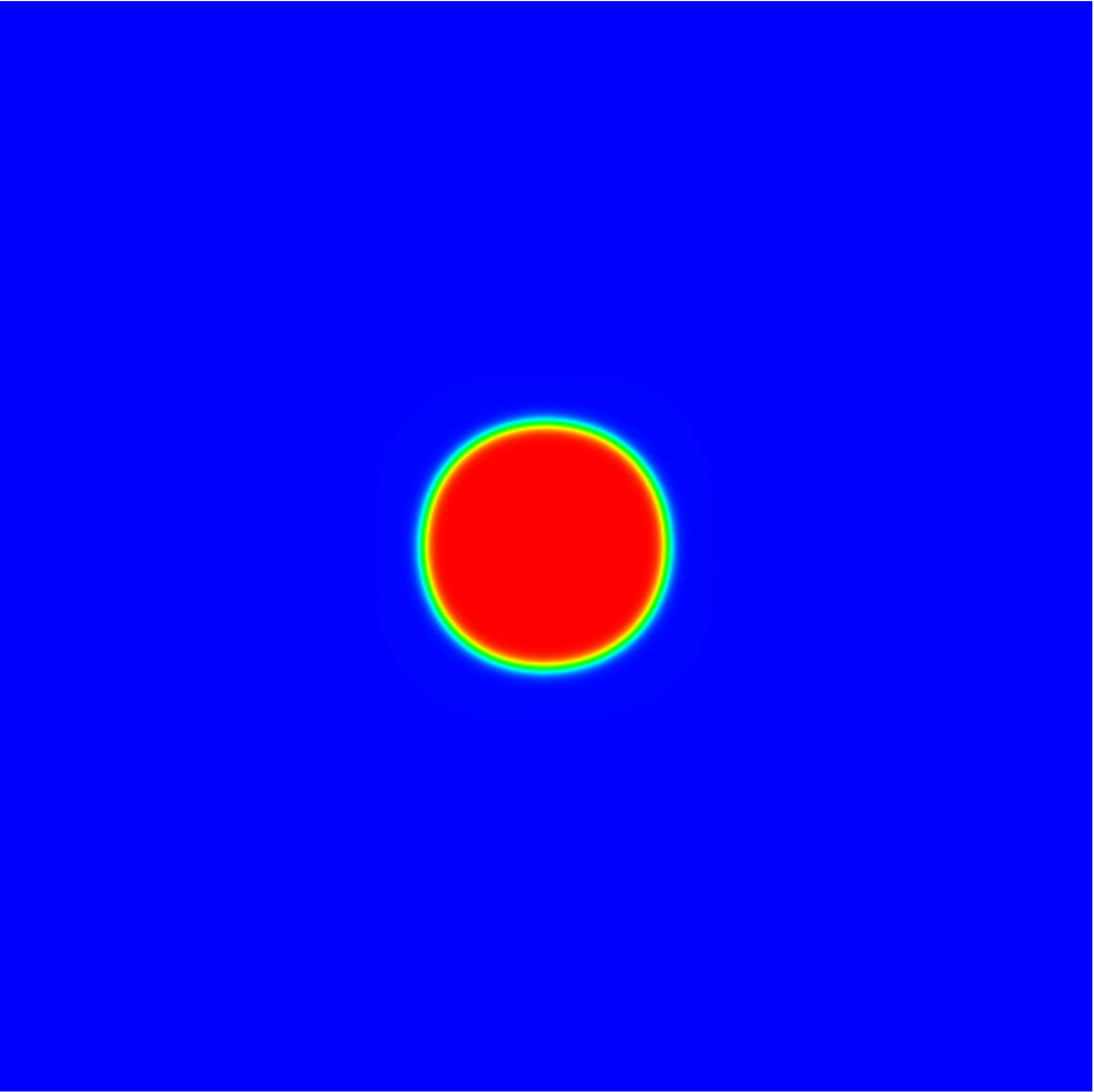}\\
   $t=0.4$ & $t=1$  
\end{tabular}
\caption{Shape relaxation of surface tension driven flow; $\epsilon=0.005$, $\sw=200$, $M(\phi)=0.1\sqrt{(1-\phi^2)^2+\epsilon^2}$, $Re=10$, $\delta t=0.005$; Adaptive mesh refinement is explored for spatial discretization.}
\label{ShaRE}
\end{figure}

\begin{figure}[h!]
\centering
\begin{tabular}{cc}
  \includegraphics[width=0.3\textwidth]{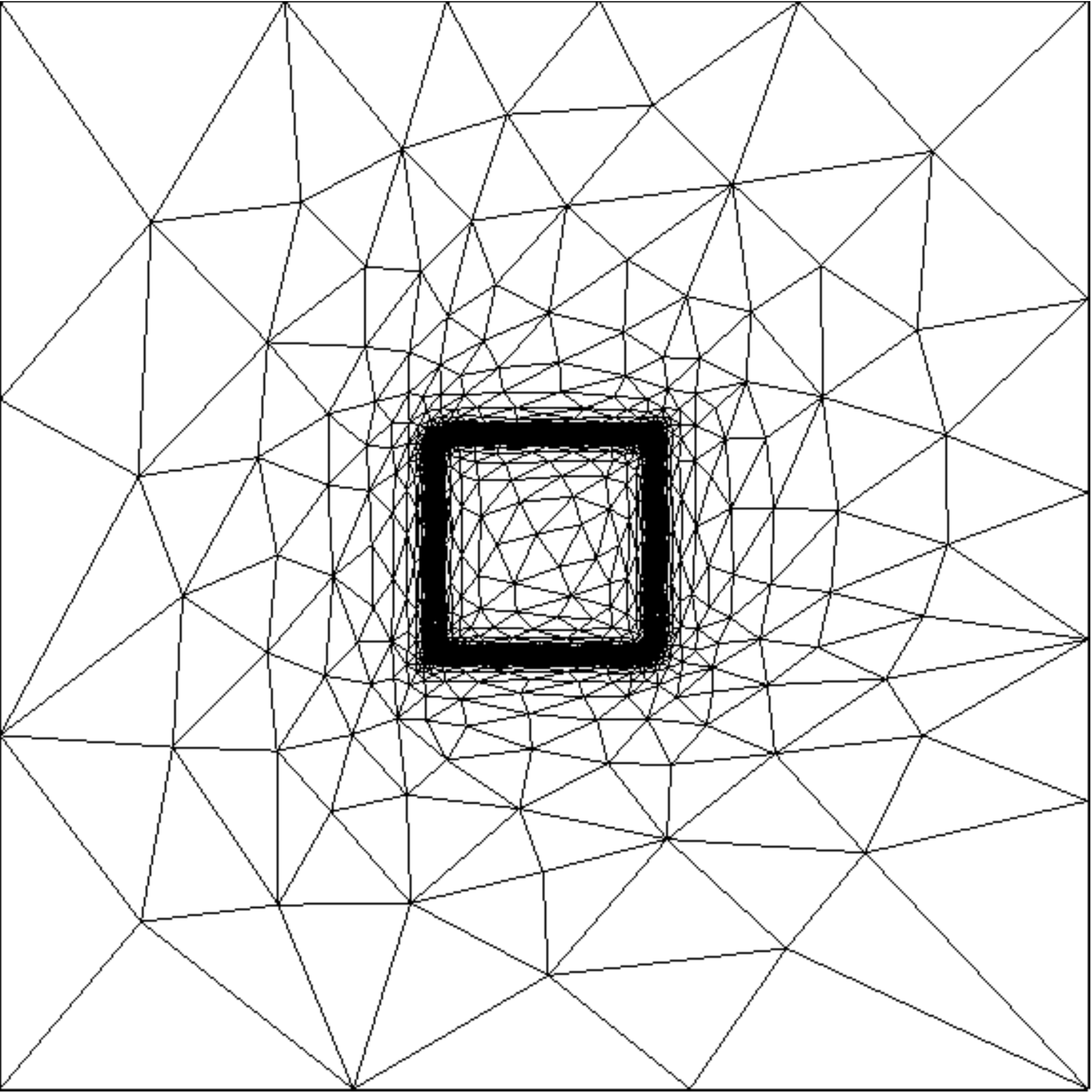}& \includegraphics[width=0.3\textwidth]{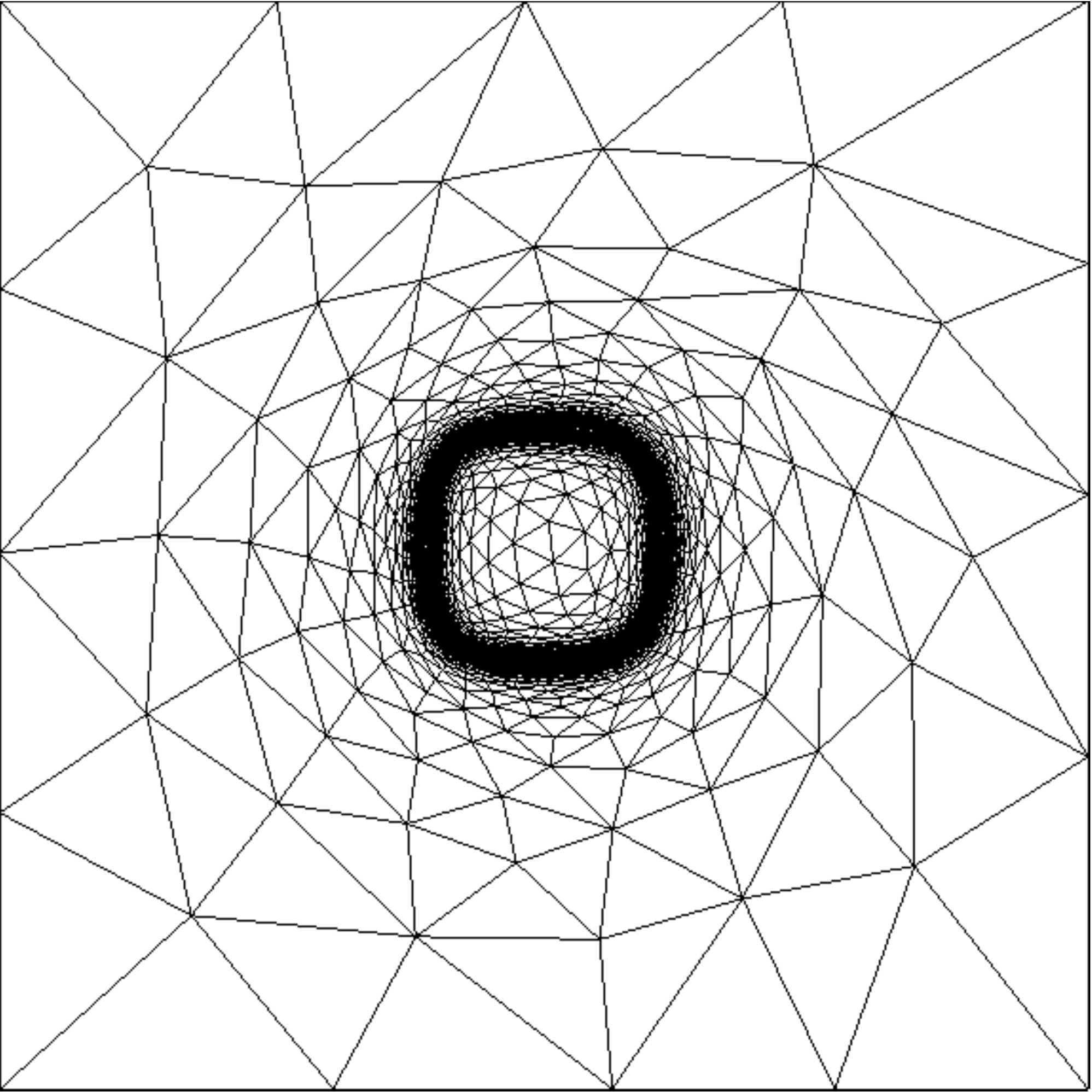}\\
   mesh at $t=0.02$ & mesh at $t=0.4$   
\end{tabular}
\caption{Adaptive mesh refinement associated with shape relaxation in Fig. \ref{ShaRE} at $t=0.02, 0.4$; $\epsilon=0.005$, $4$ grid elements are placed across the interfacial area.}
\label{AdpM}
\end{figure}


Next, we demonstrate the effect of imposed shear on shape relaxation. The initial configuration of order parameter is  given in Figure \ref{IniSh}. For velocity,  we take the initial data to be the Stokes solution to the lid driven cavity problem and for boundary data we take $\bfu|_{y=1}=\big(x(1-x), 0\big)$ and zero otherwise. We set $Re=100$ and the rest of the parameters are the same as in the case of surface tension driven flow (Fig. \ref{ShaRE}). The relaxation of the shape under shear driven flow  and the associated flow field are reported  in Fig. \ref{ShearD}. As the flow goes clockwise, the shape travels slightly to the left. Meanwhile,  the shape elongates to an ellipse with the major axis along north-west direction.

\begin{figure}[h!]
\centering
\begin{tabular}{ccc}
  $t=0.02$ & \includegraphics[width=0.26\linewidth]{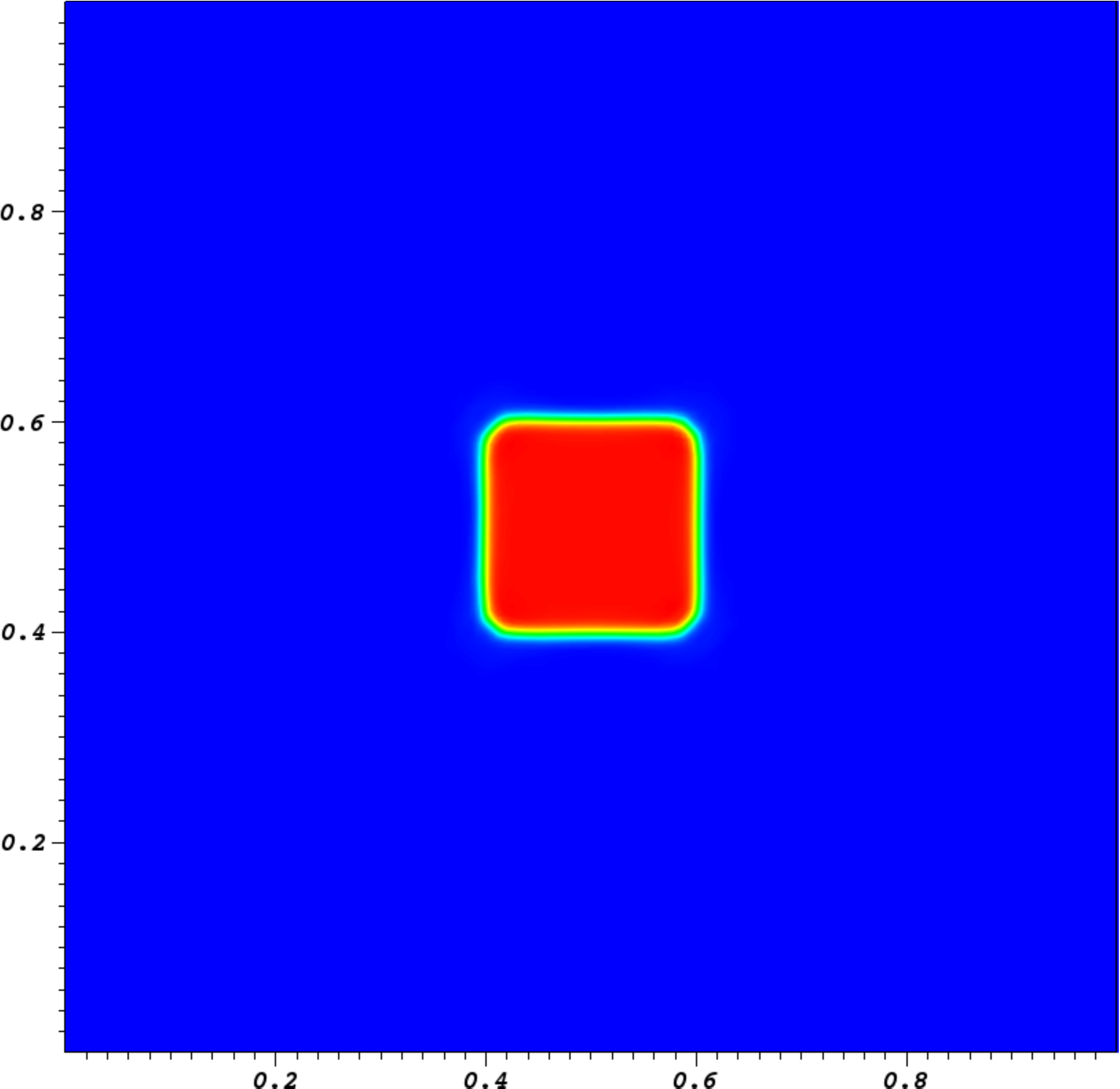}&  \includegraphics[width=0.29\linewidth]{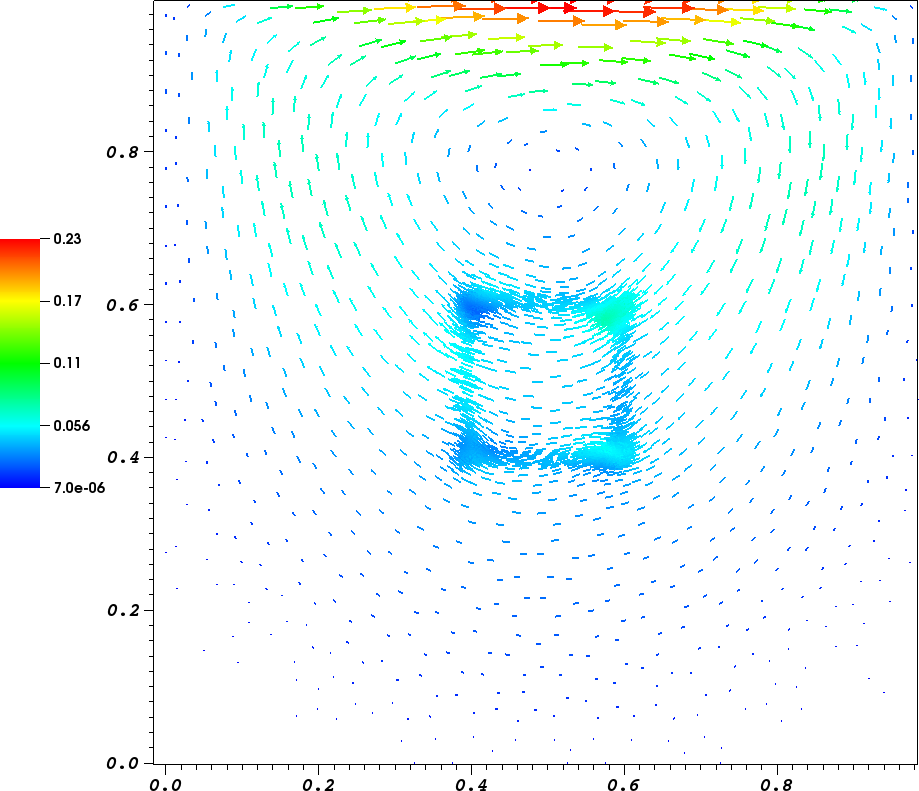}\\
  $t=0.1$& \includegraphics[width=0.26\linewidth]{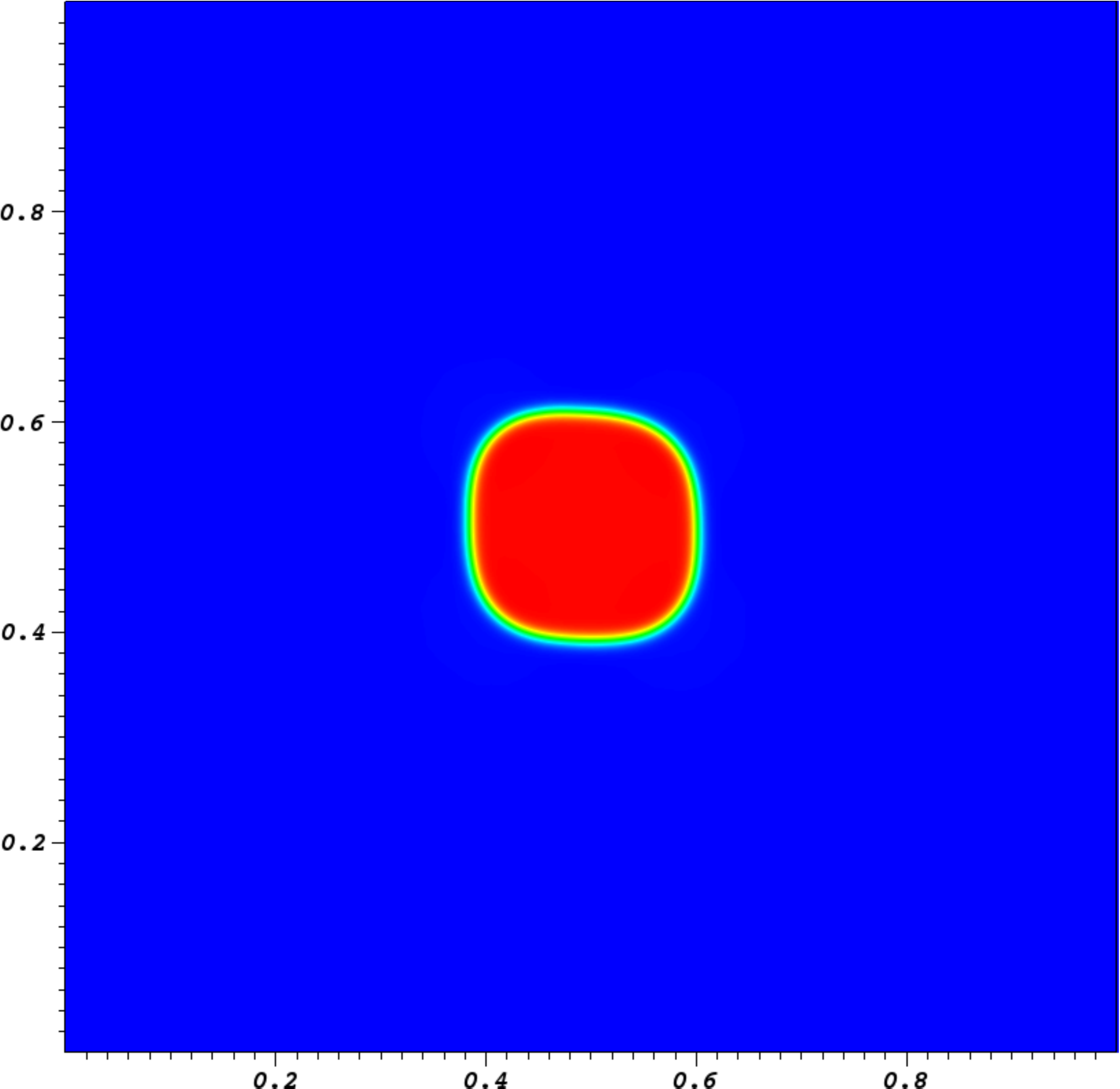} &\includegraphics[width=0.29\linewidth]{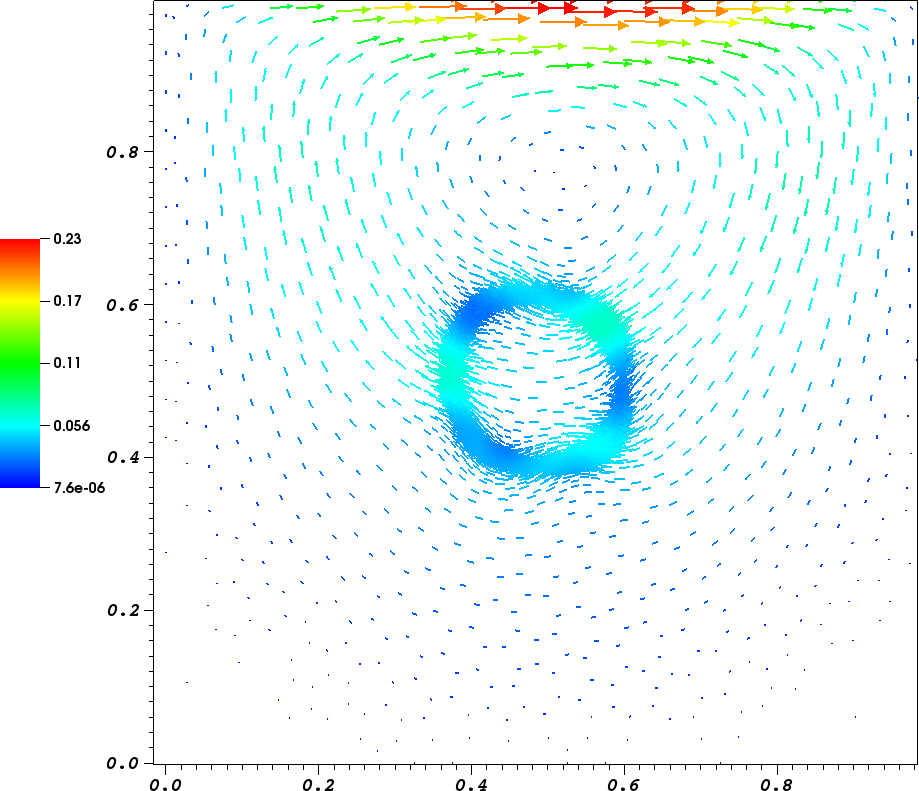}  \\
   $t=0.4$ & \includegraphics[width=0.26\linewidth]{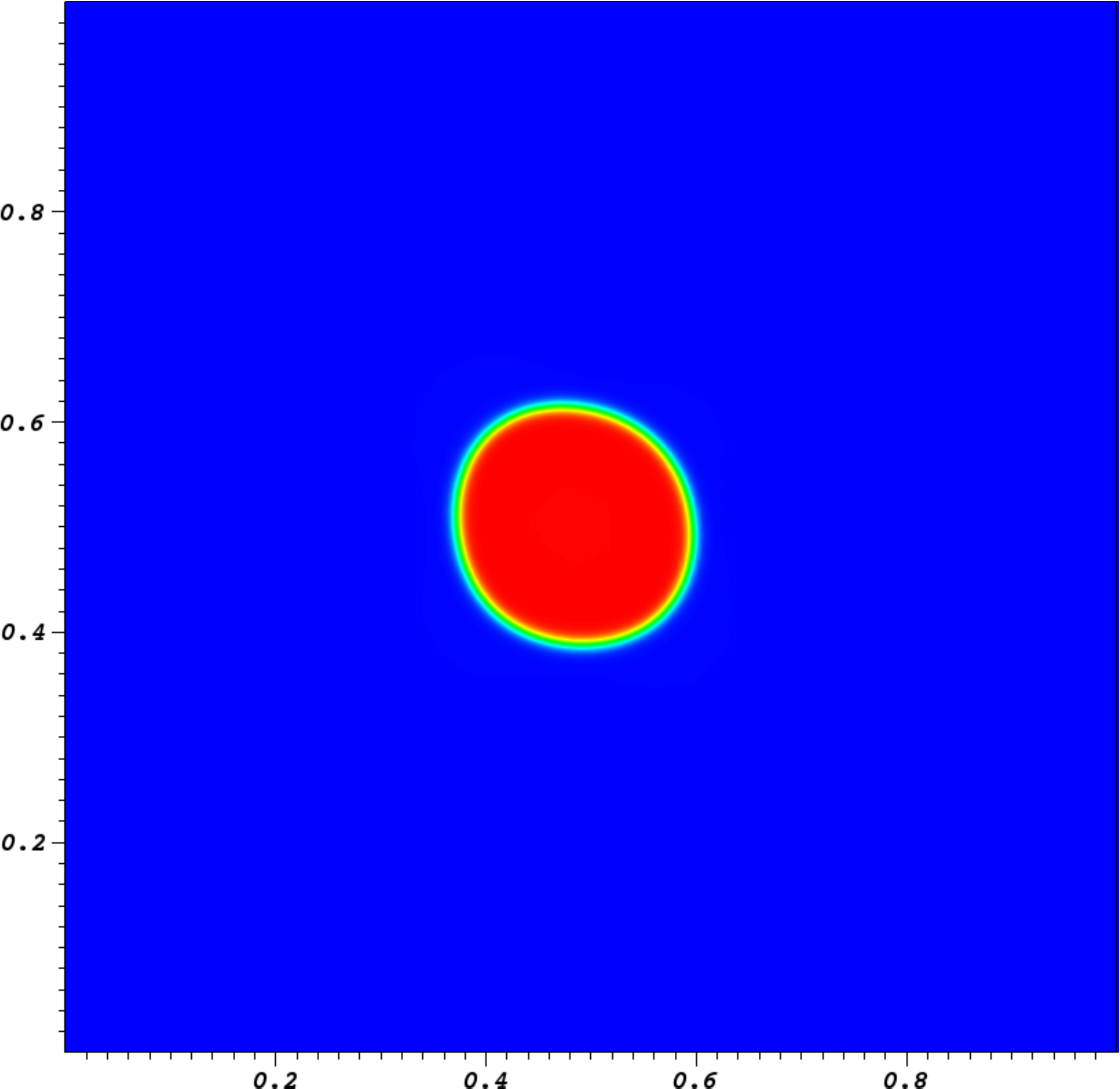}&\includegraphics[width=0.29\linewidth]{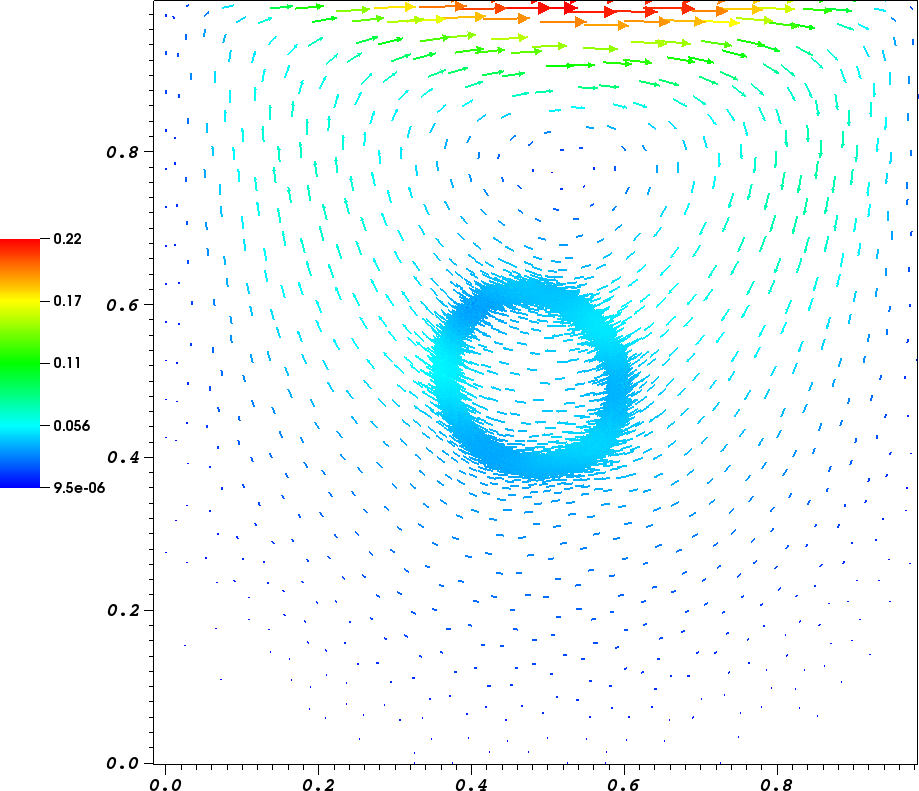} \\
   $t=1$  &\includegraphics[width=0.26\linewidth]{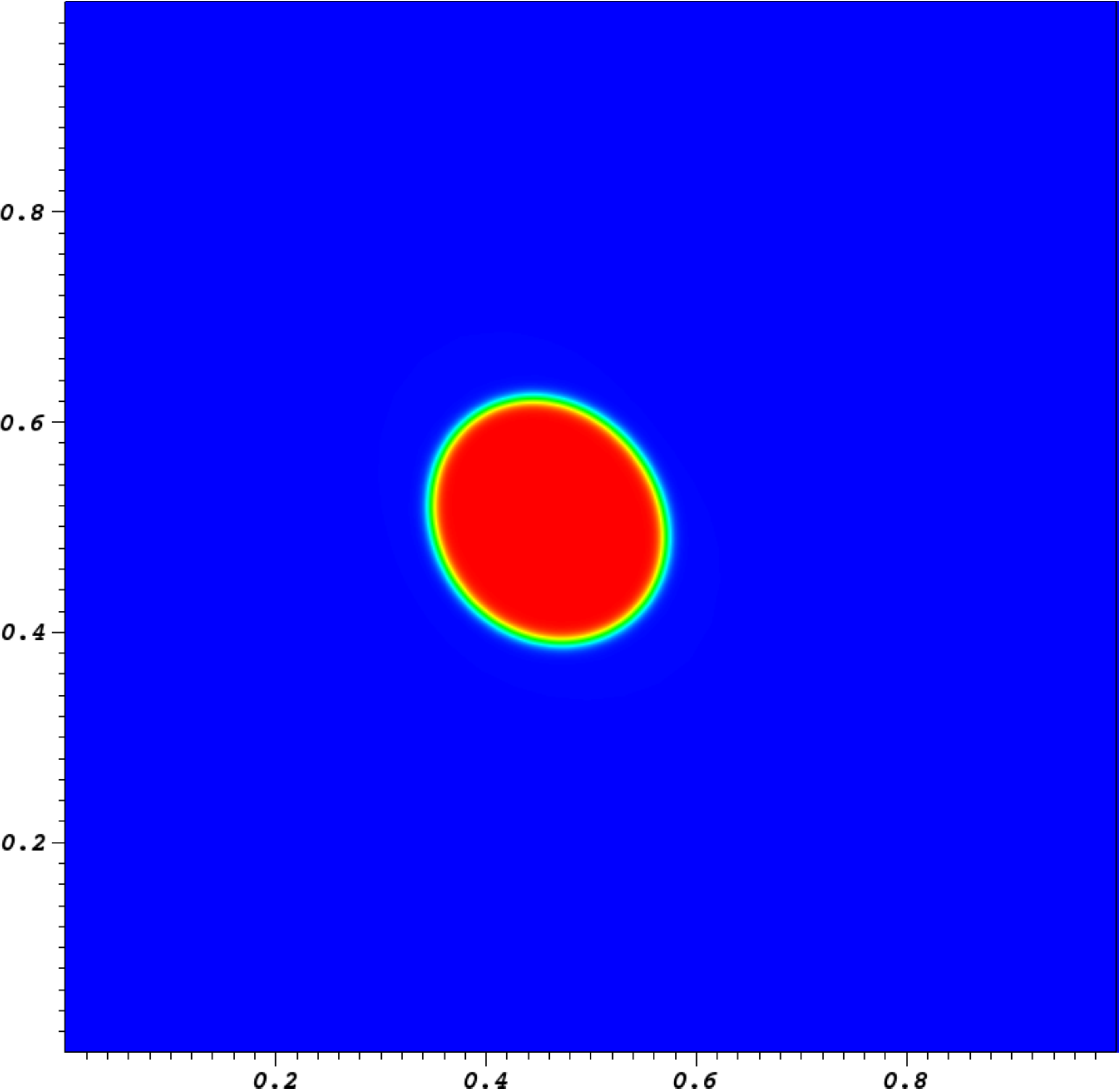} &\includegraphics[width=0.29\linewidth]{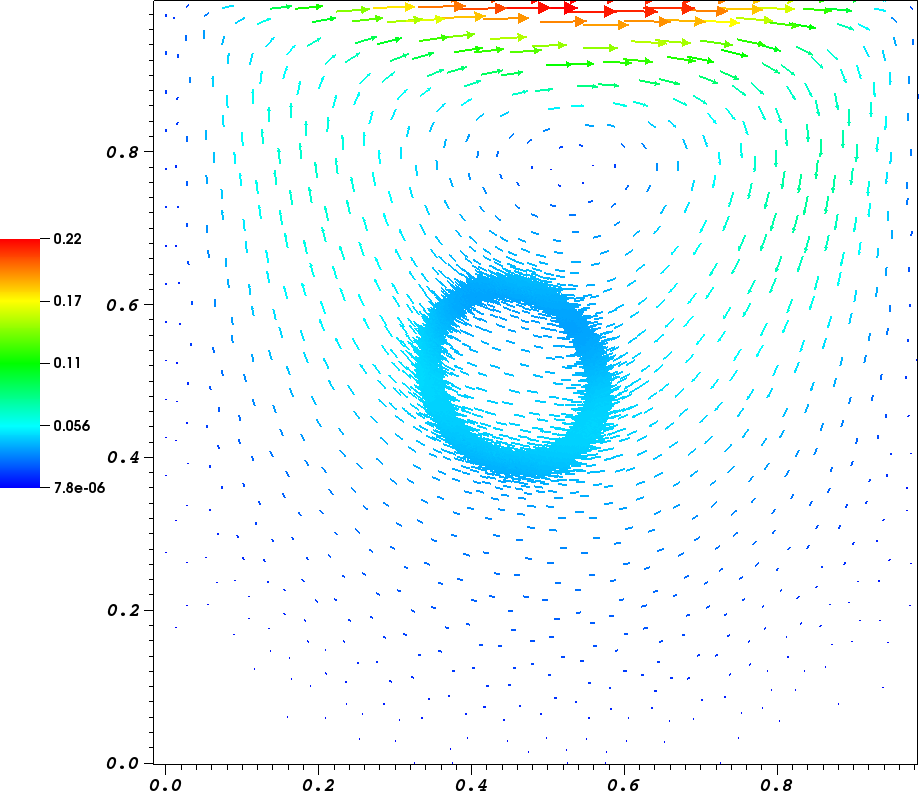}
\end{tabular}
\caption{Shape relaxation under shear driven flow and the flow field; The applied shear is on the upper boundary with a shear rate of $x(1-x)$; $Re=100$, $\epsilon=0.005$, $\sw=200$, $M(\phi)=0.1\sqrt{(1-\phi^2)^2+\epsilon^2}$.}
\label{ShearD}
\end{figure}

 



\subsection{Spinodal decomposition}
The CHNS system \eqref{CH}-\eqref{div} can be used as a model for spinodal decomposition of a binary fluid, cf. \cite{KKL2004}. Here we examine the effect of the excess surface tension, defined as $\gamma=\frac{1}{\sw}$,  on coarsening during spinodal decomposition. The initial velocity is zero $\bfu_0=0$. For the initial condition of the phase field variable, we take a random field of values $\phi_0=\bar{\phi}+r(x,y)$ with an average composition $\bar{\phi}=-0.05$ and random $r \in [-0.05, 0.05]$. We take no-slip no penetration boundary condition for velocity and homogeneous Neumann boundary condition for $\phi$ and $\mu$. The parameters are $\epsilon=0.005$, $M(\phi)=0.1\sqrt{(1-\phi^2)^2+\epsilon^2}$, $Re=10$, $\delta t=0.005$, $h=\frac{\sqrt{2}}{256}$. In Fig. \ref{Spi}, we show some snapshots of the filled contour of $\phi$ in gray scale (white color $\phi\approx 1.0$, black color $\phi \approx -1.0$) at different times with $\gamma=0,  0.1\epsilon, 1.0 \epsilon$, respectively. The case of $\gamma=0$,  corresponds to purely Cahn-Hilliard equation with no fluid motion, is included for comparison purpose. 

\begin{figure}[h!]
\centering
\begin{tabular}{cccc}

$t=4$ & \includegraphics[scale=0.168]{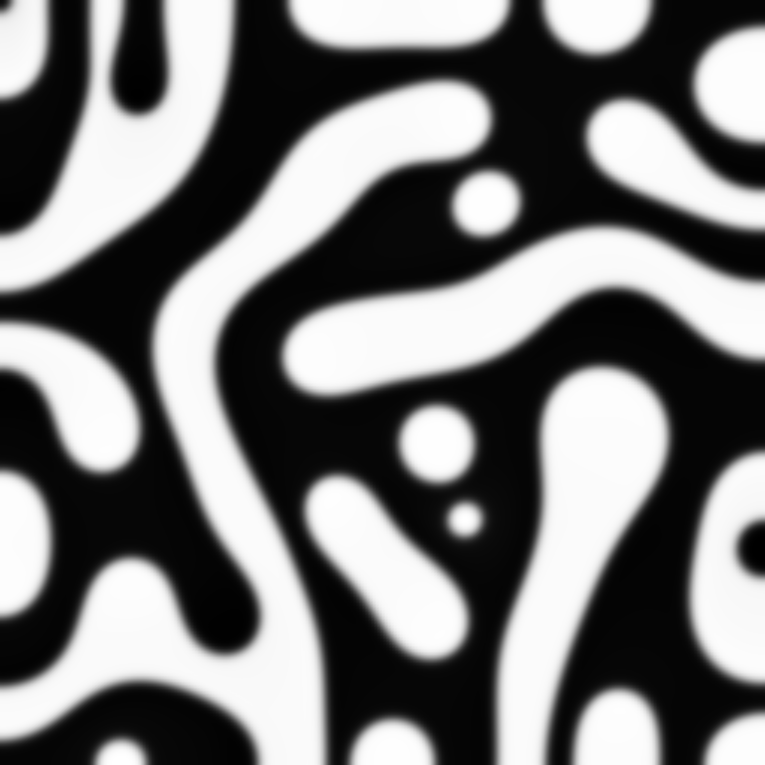}& \includegraphics[scale=0.18]{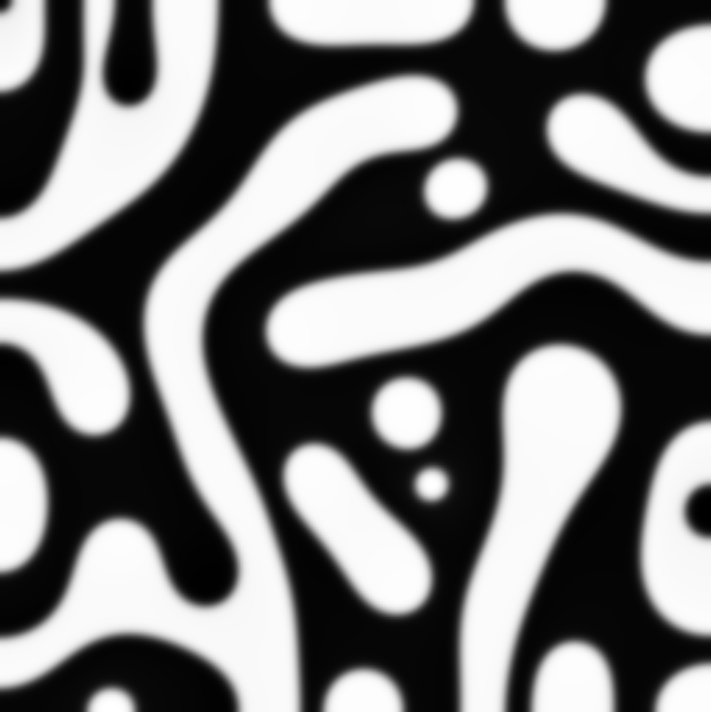} &\includegraphics[scale=0.18]{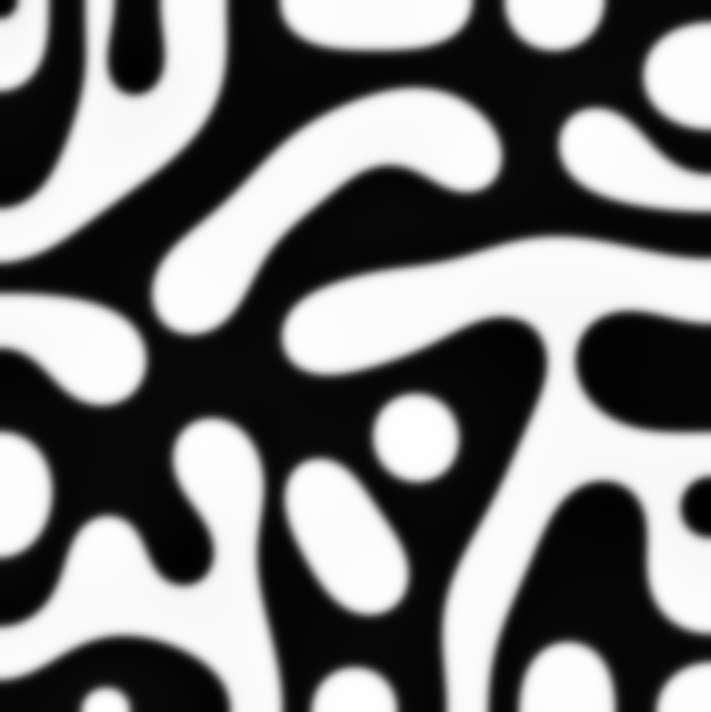} \\
 
$t=9$ & \includegraphics[scale=0.168]{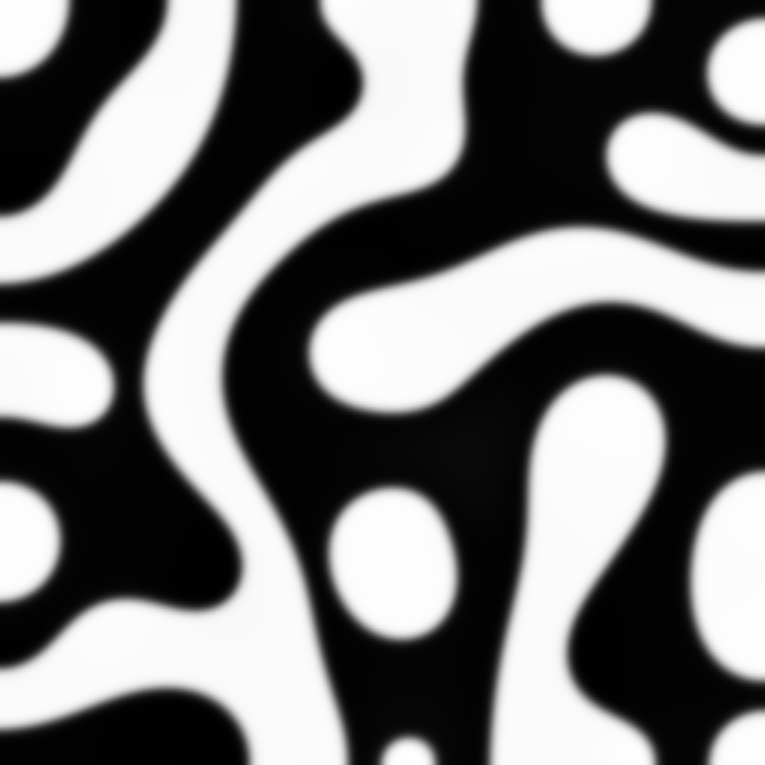}& \includegraphics[scale=0.18]{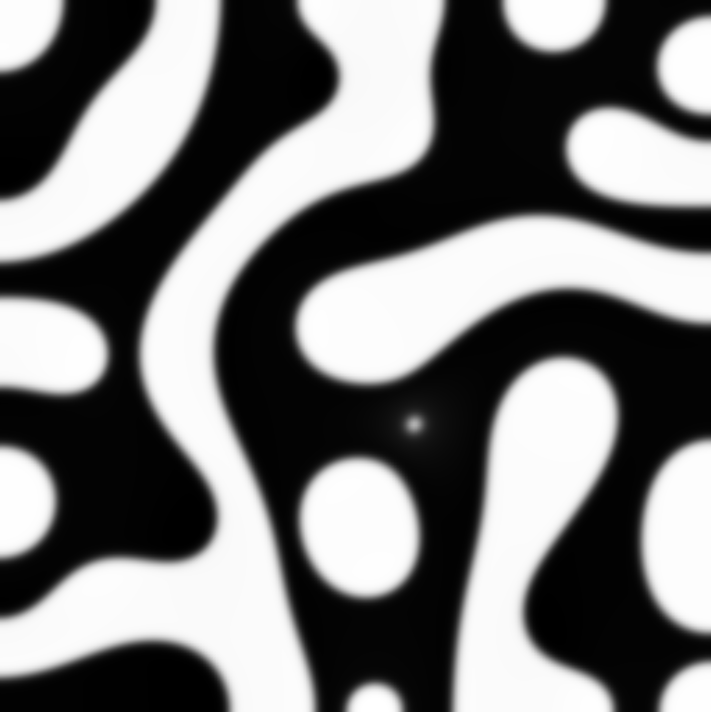} &\includegraphics[scale=0.18]{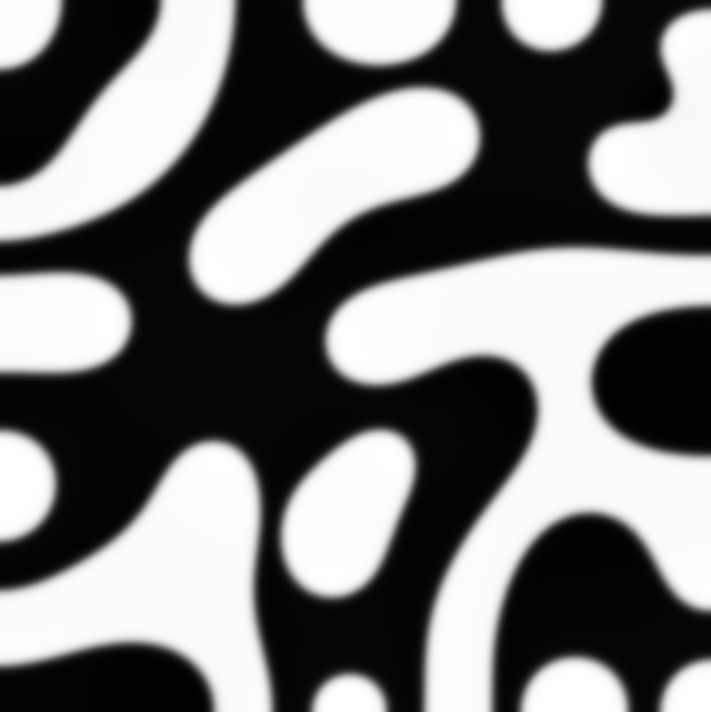}\\
 
 $t=20$ & \includegraphics[scale=0.168]{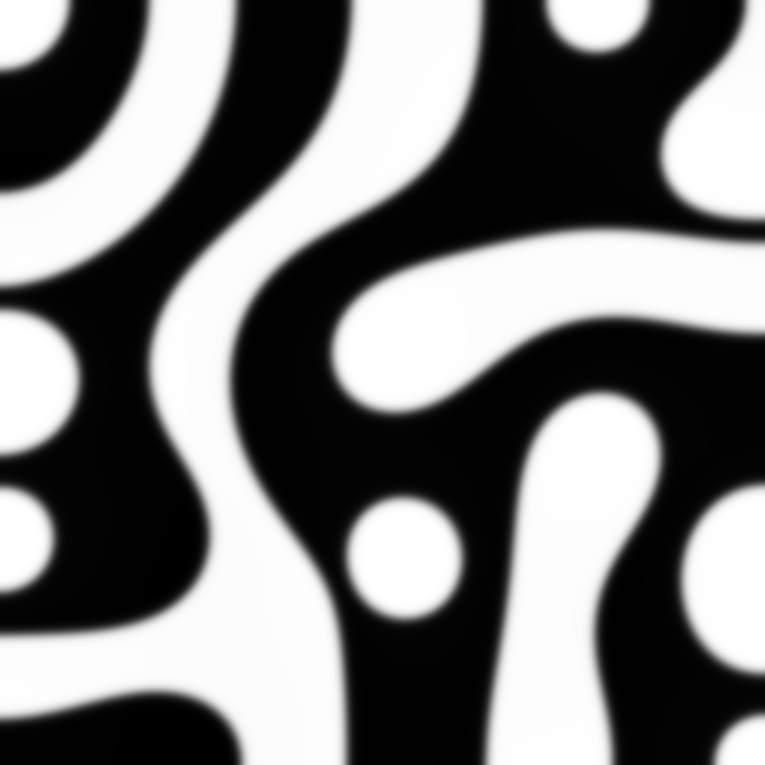}& \includegraphics[scale=0.19]{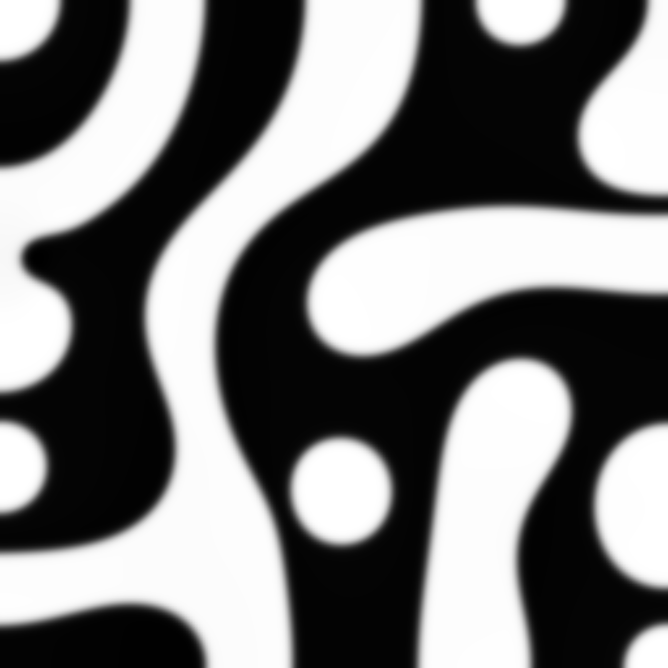} &\includegraphics[scale=0.19]{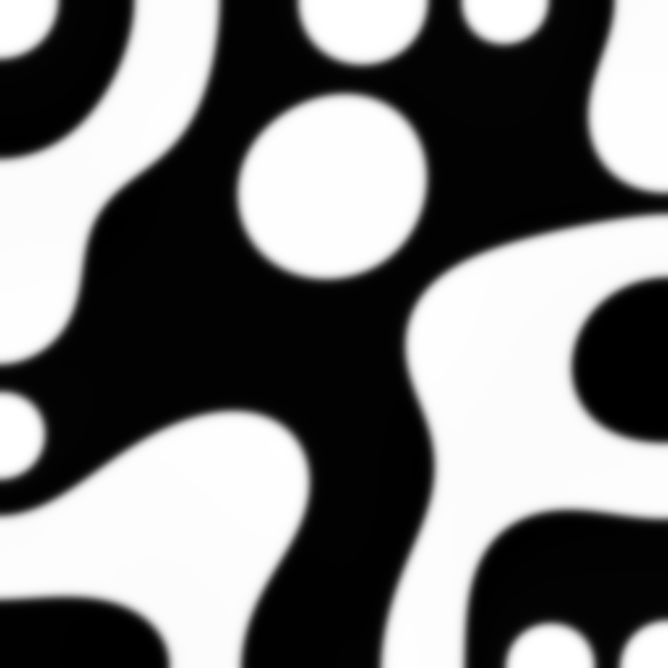}\\
 
$t=40$ & \includegraphics[scale=0.168]{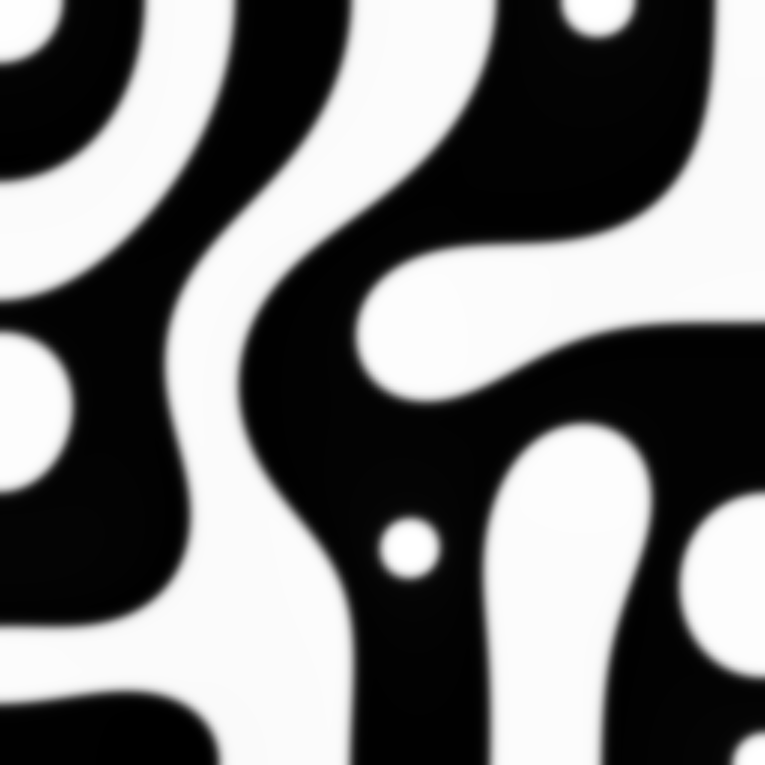}& \includegraphics[scale=0.19]{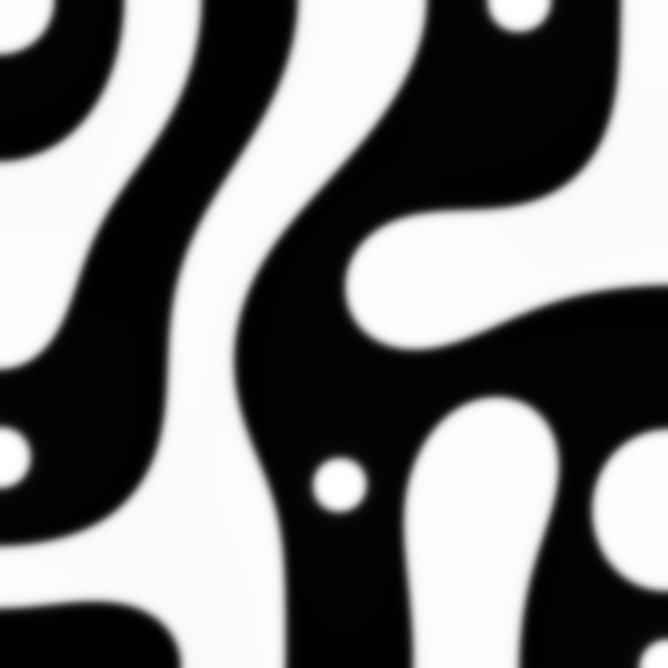} &\includegraphics[scale=0.19]{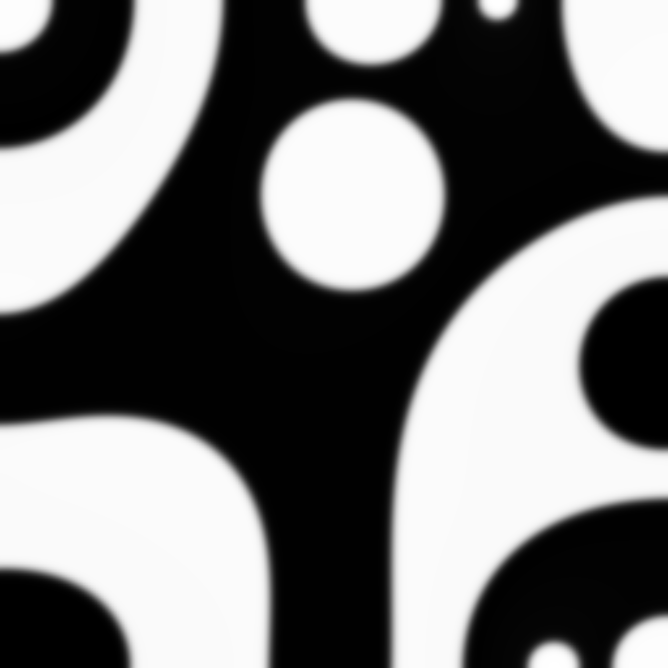}\\
   & $\gamma=0$& $\gamma=0.1 \epsilon$ &$\gamma=1.0 \epsilon$   
 
\end{tabular}
\caption{Snapshots of coarsening of a binary fluid during spinodal decomposition with $\gamma = 0.1\epsilon$ (second column), $1.0 \epsilon$ (third column), respectively; The case of $\gamma=0$ (first column) is included for comparison purpose; The rest of the parameters are $\epsilon=0.005$, $M(\phi)=0.1\sqrt{(1-\phi^2)^2+\epsilon^2}$, $Re=10$, $\delta t=0.005$, $h=\frac{\sqrt{2}}{256}$.}
\label{Spi}
\end{figure}

After a rapid initial phase separation (not shown in Fig. \ref{Spi}), the dynamics of the CHNS system are dominated by the slow process of coarsening. There are several physical mechanisms in the CHNS system that contribute to the coarsening process: bulk diffusion, surface diffusion, and hydrodynamic convection.  Note that our chosen regularized degenerate mobility $M(\phi)$ limits the bulk diffusion (of order $\epsilon$) at the late stage of the coarsening process. In comparison to the coarsening process governed by the Cahn-Hilliard equation with no fluid motion (the first column of Fig. \ref{Spi}), we find that the hydrodynamic effect speeds up the coarsening process by promoting the droplets coalescence, the larger $\gamma$, the more dramatic the coalescence effect. The effect is less discernible in the case of $\gamma=0.1 \epsilon$ ($\sw=2000$). Indeed, the morphology for $\gamma=0$ ($\sw=\infty$) and $\gamma=0.1\epsilon$  are nearly identical over the evolution. One can even observe the evaporation-condensation effect (Ostwald ripening) for the scattered isolated drops at $t=4, 9$. In contrast, for $\gamma=\epsilon$ ($\sw=200$) at the same time, the morphology is less deformed and exhibits rich connection (fewer isolated drops). Moreover, as time evolves, the scattered islands quickly merge together.    

Coarsening rate can be tied with surface energy decay rate.
The domain size of one phase $L$ (physical length scale)  can be defined as a suitable negative norm of the order parameter \cite{KoOt2002}. Recall that the surface energy $E_f$ is defined as
$$
E_f=\frac{1}{\sw}\int_{\Omega} \big( \frac{1}{\epsilon} f_0(\phi)+\frac{\epsilon}{2}|\nabla \phi|^2\big)\, dx.
$$
Thus the surface energy $E_f$ is proportional to the average interfacial circumference in 2D (interfacial area in 3D), at least near equilibrium where the order parameter roughly has a profile of hyperbolic tangent function \cite{Shen2012}. It follows from the conservation of volume that the spatially averaged surface energy should scale like the inverse of the domain diameter $L$.  This heuristic argument suggests that the decay rate of the surface energy can be used as a proxy of the phase coarsening rate. One can also motivate this argument from the standpoint of sharp interface limit. The exact relation between $L$ and $E_f$ is an inequality established rigorously  in \cite{KoOt2002}.  Fig. \ref{CoarsenRate} shows the correlation between the surface energy decay rate $E_f$ (thus coarsening rate) and the excess surface tension parameter $\gamma$. It is observed that larger surface tension $\gamma$ (smaller $\sw$) yields faster coarsening rate, which agrees with the energy law \eqref{ModEnergyLaw}.


\begin{figure}[h!]
\centering
 \includegraphics[width=0.7\linewidth]{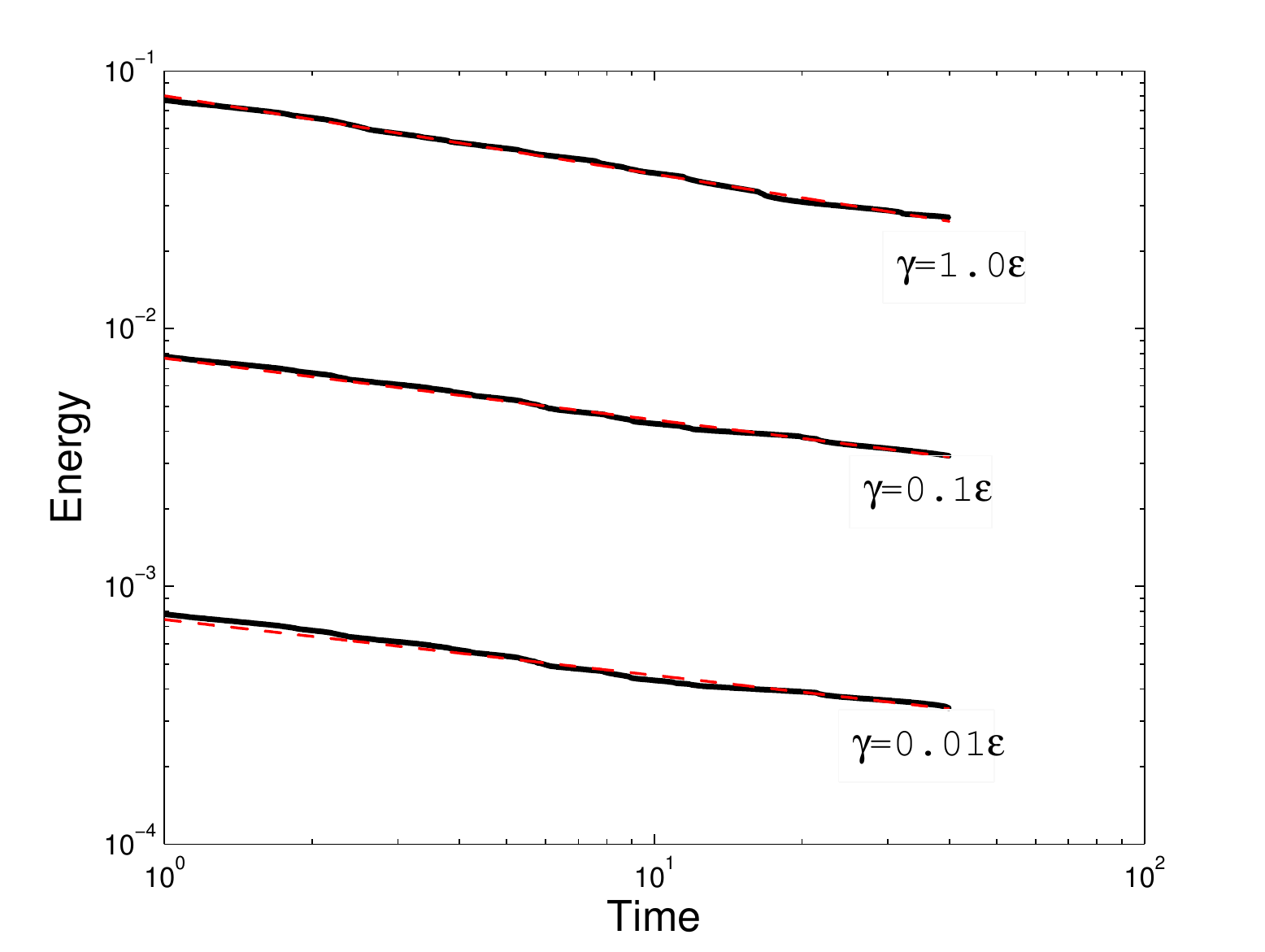}
 \caption{Loglog plot of the surface energy $E_f$ as a function of time (solid lines) for simulations in Fig. \ref{Spi}. Here we include the case $\gamma=0.01\epsilon$ for comparison purpose. The dash lines are fitted functions $c_1 t^{-0.216}$ ($\gamma=0.01\epsilon$),  $c_2 t^{-0.239}$ ($\gamma=0.1\epsilon$) and $c_3t^{-0.304}$ ($\gamma=1.0\epsilon$), respectively.}
  \label{CoarsenRate}
\end{figure}

For a large system of a binary fluid at late stage of spinodal decomposition, it is expected \cite{Siggia1979, MGG1985} that the coarsening rate  would obey a dynamical scaling law: $L(t) \propto t ^\alpha$, where $L(t)$ is the average domain size of one phase. Nevertheless, in 2D such a scaling law is open to debate (see the recent work \cite{OSS2013} and references therein). Here we run our scheme on a domain $\Omega=[0,200] \times [0,200]$ for a final time up to $10^4$. The parameters are: $\epsilon=1.0$, $M=1.0$, $\sw=1.0$, $Re=1.0$. The initial and boundary conditions for $\phi$ and $\bfu$ are set similarly as above. We take $\delta t =0.5$ and $h=\sqrt{2}$ for $t \leq 1000$, and $\delta =1.0$ and adaptive mesh refinement for $t \in [10^3, 10^4]$. We plot the decay of the surface energy $E_f$ in log-log scale in Fig. \ref{Coarsening}, which reveals roughly a decay rate of $\frac{1}{2}$ at the late stage of coarsening. This result corroborates the $t^{\frac{1}{2}}$ growth law for the average domain size proposed in \cite{MGG1985}. Note that the wall effect becomes influential when $t$ approaches $10^4$ at which large islands occupy the boundary of the domain.

\begin{figure}[h!]
\centering
 \includegraphics[width=0.7\linewidth]{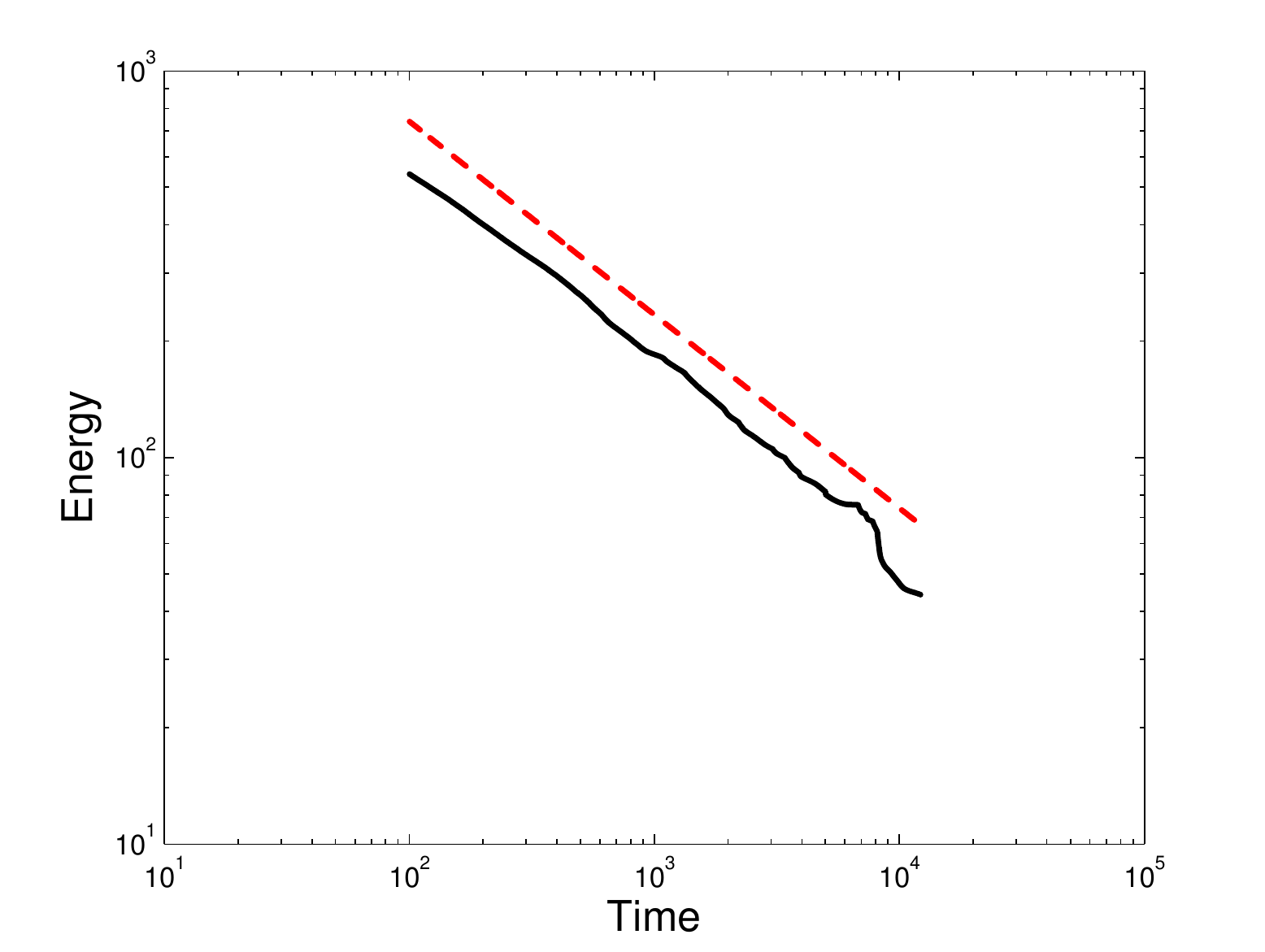}
 \caption{Loglog plot of the surface energy $E_f$ as a function of time (solid line) for the CHNS system; $\Omega=[0, 200] \times [0, 200]$, $\epsilon=1.0$, $M=1.0$, $\sw=1.0$, $Re=1.0$; The red dash line has a slope of $-\frac{1}{2}$. }
  \label{Coarsening}
\end{figure}

\section{Conclusions}

In this paper, we have presented a novel second order in time numerical method for the Cahn-Hilliard-Navier-Stokes system that models two-phase flow with matched density.
The method is efficient since we decoupled the pressure from the velocity and phase field, and the coupling between the velocity field and the phase field is weak.
We have shown in a rigorous fashion that the scheme is unconditionally stable and uniquely solvable. Fully discrete numerical methods effected with finite-element method are also presented and analyzed with similar conclusions. To the best of our knowledge, this is the first second-order scheme that decouples the pressure and the velocity and phase field variables while maintaining unconditional stability and unique solvability.

Several numerical experiments are performed to test the accuracy of the scheme. We verify numerically that our scheme is conservative, energy- dissipative, and is of second order accuracy in $L^2$ norm. We demonstrate the effectiveness of our scheme incorporated with adaptive mesh refinement by simulating the shape relaxation with and without applied shear. Finally, we also investigates the effect of surface tension on the coarsening rate of spinodal decomposition of a binary fluid. In particular, our long time numerical simulation suggests a growth rate of $t^{\frac{1}{2}}$ for a large system at late stage, which agrees with \cite{MGG1985}.

There are numerous potential extensions of the current work. The design of second-order in time scheme that decouples the pressure, velocity and phase field completely, and  is  unconditionally stable and uniquely solvable is very desirable. The extension of the current scheme to the case of unmatched density, or to the case of coupled Cahn-Hilliard-Stokes-Darcy system that models two-phase flow in karstic geometry would also be interesting \cite{HSW13}.
From the theoretical side, the rigorous error analysis of the scheme, especially with adaptive mesh,  is a very attractive but challenging topic.  
  
\section*{Acknowledgments} 
This work was completed while Han was supported as a Research Assistant on an NSF grant (DMS1312701). The authors also acknowledge the support of NSF DMS1008852, a planning grant and a multidisciplinary support grant from the Florida State University.

\bibliographystyle{plain}
\bibliography{multiphase.bib}

\end{document}